\newtheorem{theorem}{Theorem}[section]
\newtheorem{lemma}[theorem]{Lemma}
\newtheorem{corollary}[theorem]{Corollary}
\newtheorem{proposition}[theorem]{Proposition}
\theoremstyle{definition}
\newtheorem{definition}[theorem]{Definition}
\newtheorem{example}[theorem]{Example}
\theoremstyle{remark}
\newtheorem{remark}[theorem]{Remark}
\newcommand{\Dcal}{\ensuremath{\mathcal{D}}}
\newcommand{\Xcal}{\ensuremath{\mathcal{X}}}
\newcommand{\Tcal}{\ensuremath{\mathcal{T}}}
\newcommand{\Fcal}{\ensuremath{\mathcal{F}}}
\newcommand{\Acal}{\ensuremath{\mathcal{A}}}
\newcommand{\Bcal}{\ensuremath{\mathcal{B}}}
\newcommand{\Scal}{\ensuremath{\mathcal{S}}}
\newcommand{\Rcal}{\ensuremath{\mathcal{R}}}
\newcommand{\Lcal}{\ensuremath{\mathcal{L}}}
\newcommand{\Kcal}{\ensuremath{\mathcal{K}}}
\newcommand{\Kbb}{\mathbb{K}}
\newcommand{\Z}{\mathbb{Z}}
\newcommand{\ra}{\rightarrow}
\DeclareMathOperator{\Spec}{Spec}
\numberwithin{equation}{section}
\begin{document}
\title{Universal localisations via silting}
\author{Frederik Marks and Jan {\v S}{\v{t}}\!ov{\'{\i}}{\v{c}}ek}
\address{Frederik Marks, University of Stuttgart, Institute for Algebra and Number Theory, Pfaffenwaldring 57, 70569 Stuttgart, Germany}
\email{marks@mathematik.uni-stuttgart.de}

\address{Jan {\v S}{\v{t}}\!ov{\'{\i}}{\v{c}}ek, Charles University in Prague, Faculty of Mathematics and Physics, Department of Algebra, Sokolovsk\'a 83, 186 75 Praha, Czech Republic}
\email{stovicek@karlin.mff.cuni.cz}

\subjclass[2010]{16E30, 16S85, 18E40}
\keywords{Universal localisation, silting module, tilting module, ring epimorphism}
\thanks{The first named author was supported by a grant within the DAAD P.R.I.M.E. program. The second named author was supported by grant GA~\v{C}R 14-15479S from the Czech Science Foundation.} 

\begin{abstract}
We show that silting modules are closely related with localisations of rings. More precisely, every partial silting module gives rise to a localisation at a set of maps between countably generated projective modules and, conversely, every universal localisation, in the sense of Cohn and Schofield, arises in this way. To establish these results, we further explore the finite-type classification of tilting classes and we use the morphism category to translate silting modules into tilting objects. In particular, we prove that silting modules are of finite type.
\end{abstract}
\maketitle


\section{Introduction}
The aim of this article is to show that silting theory provides a powerful tool to study localisations of rings. We will be particularly interested in the concept of universal localisation as introduced by Cohn and Schofield (see \cite{Sch0}). These localisations have proved to be useful in different branches of mathematics like algebraic K-theory (see \cite{NR}), representation theory (see \cite{AA,AS2}) and topology (see \cite{R}). However, in general, very little is known about their homological properties and classification results are only available in special cases (see \cite{KS,Sch2}). 

A general and powerful approach to learn about properties of a ring is to study its representation theory. Here, we show that the notion of silting module provides a useful module-theoretic counterpart of universal localisations.
Our main result states that every universal localisation is controlled by a (possibly large) partial silting module.
As a consequence, the localised ring will be isomorphic to an idempotent quotient of the endomorphism ring of such a module. 
Furthermore, the connection with silting modules will make it possible to better understand universal localisations by using well-established tools in representation theory.

\medskip

The notion of silting module was introduced in \cite{AMV} to provide a common setup to study simultaneously (possibly large) 1-tilting modules over any ring and support $\tau$-tilting modules over a finite dimensional algebra. Silting modules can be understood as the module-theoretic counterpart of two-term silting complexes and they parametrise certain torsion pairs in the module category and in its derived category. 

The fact that a (possibly large) partial 1-tilting module always induces a certain epimorphism in the category of rings goes back to~\cite{CTT}. There, it was shown that the perpendicular category to such a module can be identified with the category of modules over the codomain of a ring epimorphism (see also \cite{GdP}). In the context of finite dimensional algebras over a field, an analogous abelian category associated with a $\tau$-rigid (equivalently, a partial silting) module was studied in \cite{J}. Such an abelian category can be interpreted as the category of modules over a suitable universal localisation. This idea was further developed in \cite{AMV2} through a systematic study of ring epimorphisms arising from partial silting modules. Building on the works above, it was proved that over hereditary rings (see \cite{AMV2}) and over certain finite dimensional algebras (see \cite{MS}) silting modules can be used to classify all universal localisations. Here, we show that this phenomenon remains conceptually true when working over arbitrary rings. However, different techniques are needed.

\medskip

A key ingredient for our approach is the finite-type characterisation of tilting classes which states that a subcategory of modules is a tilting class if and only if it is the class of modules Ext-orthogonal to a set of finitely presented modules of bounded projective dimension. This result was obtained in a series of papers by several authors (see \cite{AC,AHT,BET,BH,BS,ST}). For 1-tilting modules, we establish a similar description of Hom-Ext-orthogonal subcategories (see Theorem \ref{Main 1}). This result directly links universal localisations to tilting theory. In a second step, we use the morphism category associated with a given ring to translate silting modules into tilting objects. As a consequence, we obtain a finite-type characterisation of silting classes (see Theorem \ref{main silting classes}). Moreover, by combining the above ingredients we prove that every ring epimorphism arising from a partial silting module can be interpreted as a localisation at a set of maps between countably generated projective modules (see Theorem \ref{silting countable type}). Conversely, we show that every universal localisation arises from a partial silting module in this way (see Theorem \ref{Main 2}).

\medskip

The paper is organised as follows. After setting up some notation, Section \ref{section tilting} is dedicated to the interplay of tilting modules and cotorsion pairs. We discuss the finite-type characterisation of tilting classes and prove Theorem \ref{Main 1}. In Section \ref{section loc}, we introduce two kinds of localisations, namely tilting ring epimorphisms and universal localisations. Section \ref{section mor} focuses on the morphism category and contains two key lemmas which allow us to translate silting modules into tilting objects. Finally, in Section \ref{section silting}, we introduce silting modules and silting ring epimorphisms and we benefit from the previous work by proving our main theorems.


\section{Notation}\label{setup}
Throughout, let $A$ be a ring with unit. By $Mod(A)$ we denote the category of all left $A$-modules. If not stated otherwise, by an $A$-module we always mean a left $A$-module. The category of all (respectively, all finitely generated) projective $A$-modules is denoted by $Proj(A)$ (respectively, $proj(A)$). By $\Kcal^b(A)$ we denote the bounded homotopy category of chain complexes in $Mod(A)$.
For an $A$-module $X$, we denote by $Add(X)$ (respectively, $Gen(X)$) the full subcategory of $Mod(A)$ containing all direct summands (respectively, all epimorphic images) of direct sums of copies of $X$. For a class $\Xcal$ of $A$-modules, $\Xcal^{\perp_0}$ (respectively, $\Xcal^{\perp_1}$) is defined to be the full subcategory of $Mod(A)$ consisting of all $A$-modules $Y$ such that $Hom_A(X,Y)=0$ (respectively, $Ext_A^1(X,Y)=0$) for all $X\in\Xcal$. Moreover, we set $\Xcal^\perp:=\Xcal^{\perp_0}\cap\Xcal^{\perp_1}$; this category is called the right \textbf{perpendicular category} to $\Xcal$ in \cite{GL}. Dually, we define the subcategories $^{\perp_0}\Xcal, ^{\perp_1}\Xcal$ and $^\perp\Xcal$. We further say that an $A$-module $Y$ is $\Xcal$-\textbf{filtered} if there is an ordinal $\lambda$ and an increasing sequence of submodules of $Y$, $(Y_\alpha\mid \alpha\le\lambda)$, such that $Y_0=0$, $Y=Y_\lambda$, $Y_\alpha=\bigcup_{\beta<\alpha}Y_\beta$ for all limit ordinals $\alpha\le\lambda$ and $Y_{\alpha +1}/Y_\alpha$ is isomorphic to a module in $\Xcal$ for all $\alpha<\lambda$.


\section{Tilting modules and cotorsion pairs}\label{section tilting}
In this article, by a (partial) tilting module, we always mean an a priori large (partial) tilting module of projective dimension at most one.
Recall that an $A$-module $T$ is called \textbf{partial tilting}, if $T^{\perp_1}$ is a torsion class containing $T$ (hence in particular $Gen(T) \subseteq T^{\perp_1}$) and it is called \textbf{tilting}, if $T^{\perp_1}=Gen(T)$.
It follows directly from the definition (see \cite{CT}) that a (partial) tilting module is of projective dimension at most one and that an $A$-module $T$ is tilting if and only if the following three conditions are satisfied:
\begin{itemize}
\item $pd_A(T)\leq 1$;
\item $Ext^1_A(T,T^{(I)})=0$ for all sets $I$; 
\item there are $T_0,T_1\in Add(T)$ and a short exact sequence $$0\ra A\ra T_0\ra T_1\ra 0.$$
\end{itemize}

By \cite[Theorem 1.9]{CT}, every partial tilting module $T$ can be completed to a tilting module $T\oplus T'$ with the same associated torsion class $T^{\perp_1}=(T\oplus T')^{\perp_1}$.
If $T$ is partial tilting, we say that $T^{\perp_1}$ is a \textbf{tilting class}. Tilting classes can be characterised by a finite-type condition. The following result (that can be stated more generally in the context of $n$-tilting modules for a non-negative integer $n$, see \cite{BS,ST}) was proved in a series of articles by different authors.

\begin{theorem}[\cite{AC,AHT,BET,BH}]\label{tilting finite type}
Let $\Tcal$ be a full subcategory of $Mod(A)$. Then $\Tcal$ is a tilting class if and only if there is a set $\Scal$ of finitely presented $A$-modules of projective dimension at most one such that $\Scal^{\perp_1}=\Tcal$. 
\end{theorem}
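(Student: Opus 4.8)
I would prove the two implications separately. The ``if'' direction is a fairly direct application of the theory of cotorsion pairs: suppose $\Scal$ is a set of finitely presented modules of projective dimension at most one with $\Scal^{\perp_1}=\Tcal$, put $\Bcal:=\Tcal$ and $\Acal:={}^{\perp_1}\Bcal$, so that $(\Acal,\Bcal)$ is the cotorsion pair generated by $\Scal$. By the Eklof--Trlifaj theorem this cotorsion pair is complete and $\Acal$ is the class of direct summands of $(\Scal\cup\{A\})$-filtered modules; since every member of $\Scal$ has projective dimension at most one and $A$ is projective, Eklof's lemma shows that every module in $\Acal$ has projective dimension at most one, and in particular $(\Acal,\Bcal)$ is hereditary. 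Moreover $\Bcal=\Scal^{\perp_1}$ is closed under arbitrary direct sums, because for a finitely presented $S$ of projective dimension at most one the functor $Ext^1_A(S,-)$ commutes with direct sums. By the characterisation of tilting cotorsion pairs (see \cite{BET}, cf.\ also \cite{CT}) --- a cotorsion pair generated by a set whose left-hand class consists of modules of projective dimension at most one and whose right-hand class is closed under direct sums is a tilting cotorsion pair --- there is a tilting module $T$ with $T^{\perp_1}=\Bcal=\Tcal$ (explicitly, $T=T_0\oplus T_1$ where $0\to A\to T_0\to T_1\to0$ is a special $\Bcal$-preenvelope, so that $T_0,T_1\in\Acal$); hence $\Tcal$ is a tilting class.

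For the converse, suppose $\Tcal=T^{\perp_1}$ for a partial tilting module $T$; then $(\Acal,\Bcal):=({}^{\perp_1}\Tcal,\Tcal)$ is the cotorsion pair generated by $\{T\}$ and, exactly as above, every module in $\Acal$ has projective dimension at most one. Let $\Scal$ be a set of representatives for the finitely presented modules of projective dimension at most one that lie in $\Acal$; since $\Scal\subseteq\Acal$ the inclusion $\Bcal=\Acal^{\perp_1}\subseteq\Scal^{\perp_1}$ is immediate, and the whole point is $\Scal^{\perp_1}\subseteq\Bcal$. I would prove this by showing that every module in $\Acal$ lies in ${}^{\perp_1}(\Scal^{\perp_1})$ (whence $\Scal^{\perp_1}\subseteq\Acal^{\perp_1}=\Bcal$), and by the Eklof--Trlifaj theorem it is enough to exhibit each module of $\Acal$ as a direct summand of a module filtered by $\Scal\cup\{A\}$. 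The first step (``countable type'') reduces the size of the generators: using the deconstructibility of $\Acal$ together with Hill's lemma, one replaces $T$ by a tilting module with the same tilting class that is filtered by countably presented modules of projective dimension at most one, so that $\Bcal=\Ccal^{\perp_1}$ where $\Ccal$ is the set of countably presented modules of projective dimension at most one in $\Acal$; equivalently, every module of $\Acal$ is a direct summand of a $\Ccal$-filtered module. The second step (``finite type'') shows that each $C\in\Ccal$ is itself $(\Scal\cup\{A\})$-filtered: choosing a projective resolution $0\to Q\to P\to C\to0$ with $P,Q$ countably generated projective and approximating the presentation $Q\hookrightarrow P$ by finitely generated data, one writes $C$ as the union of a countable chain $0=S_0\subseteq S_1\subseteq S_2\subseteq\cdots$ of finitely presented submodules, and --- this is the delicate heart of the matter --- one arranges the chain so that each quotient $S_{i+1}/S_i$ lies in $\Acal$; then each $S_i\in\Acal$, each $S_{i+1}/S_i$ is finitely presented of projective dimension at most one (hence in $\Scal$), so $C$ is countably $\Scal$-filtered and, by Eklof's lemma, $C\in{}^{\perp_1}(\Scal^{\perp_1})$. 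Combining the two steps gives $\Acal\subseteq{}^{\perp_1}(\Scal^{\perp_1})$, hence $\Scal^{\perp_1}=\Bcal=\Tcal$.

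The formal content is the ``if'' direction and the reduction to countably presented generators (which goes back to \cite{AC}, see also \cite{AHT}). The genuine obstacle is the second step of the converse: ensuring the finitely presented approximations $S_i$ of a countably presented module $C\in\Acal$ have successive quotients inside $\Acal$ --- equivalently, showing that a $1$-tilting class of countable type is of finite type. This is where the bound $pd_A\le1$, the completeness of the cotorsion pair and the failure of $Ext^1_A(-,N)$ to commute with direct limits must be balanced against one another: via the telescope presentation $0\to\bigoplus_i S_i\to\bigoplus_i S_i\to C\to0$ one identifies $Ext^1_A(C,N)$ with ${\varprojlim}^1_i Hom_A(S_i,N)$ for $N\in\Scal^{\perp_1}$, and the role of the careful choice of chain is precisely to make this inverse system Mittag--Leffler so that the $\varprojlim^1$ vanishes. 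This is the main content of \cite{BH}, with \cite{BET} establishing the analogous countable-to-finite passage for $n$-tilting classes.
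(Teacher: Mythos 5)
Your outline is correct and follows essentially the same route as the paper, which itself gives no argument beyond citing \cite[Theorem 2.2]{AHT} and \cite[Theorem 2.6]{BH}: you reconstruct exactly the strategy of those cited works, namely the Eklof--Trlifaj/cotorsion-pair argument for the ``if'' direction and, for the converse, the reduction to countable type followed by the countable-to-finite passage via the Mittag--Leffler/${\varprojlim}^1$ mechanism. You also correctly locate the genuinely hard step (the finite-type argument for countably presented modules) in \cite{BH}, which is precisely where the paper defers as well.
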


\begin{proof}
We refer to~\cite[Theorem 2.2]{AHT} and~\cite[Theorem 2.6]{BH}.
\end{proof}

To better understand this characterisation we recall the following definition.

\begin{definition}\label{def cotorsion pairs}
A pair $(\Acal,\Bcal)$ of full subcategories in $Mod(A)$ is called a \textbf{cotorsion pair} if $\Acal^{\perp_1}=\Bcal$ and $^{\perp_1}\Bcal=\Acal$. Moreover, we say that $(\Acal,\Bcal)$ is \textbf{complete} if for all $X$ in $Mod(A)$ there are short exact sequences
$$\xymatrix{0\ar[r] & X\ar[r] & B_X\ar[r] & A_X\ar[r] & 0 }$$
$$\xymatrix{0\ar[r] & B^X\ar[r] & A^X\ar[r] & X\ar[r] & 0 }$$
with $A_X,A^X\in\Acal$ and $B_X,B^X\in\Bcal$.
\end{definition}

For a class $\Lcal$ of $A$-modules, there is an associated cotorsion pair $(^{\perp_1}(\Lcal^{\perp_1}),\Lcal^{\perp_1})$ generated by $\Lcal$. If $\Lcal$ is a set, we get more. The treatment of cotorsion pairs generated by a set can be traced back to \cite{ET}, and here we collect essentials of the theory which we need for our paper.

\begin{proposition}\label{prop cotorsion pairs}
If $\Lcal$ is a set of $A$-modules, then the cotorsion pair $(\Acal,\Bcal):=(^{\perp_1}(\Lcal^{\perp_1}),\Lcal^{\perp_1})$ is complete and the module $A_X$ in Definition~\ref{def cotorsion pairs} can be chosen to be $\Lcal$-filtered. The class $^{\perp_1}(\Lcal^{\perp_1})$ contains all direct summands of $\Lcal$-filtered modules and, if $\Lcal$ contains the regular module $A$, $^{\perp_1}(\Lcal^{\perp_1})$ consists precisely of direct summands of $\Lcal$-filtered modules.
\end{proposition}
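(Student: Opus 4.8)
The plan is to invoke the standard machinery of cotorsion pairs generated by a set, due to Eklof and Trlifaj \cite{ET}; below I sketch the two ingredients and how the assertions follow. The first ingredient is \emph{Eklof's Lemma}: for an arbitrary class $\Ccal$ of $A$-modules and any subclass $\Ccal'\subseteq{}^{\perp_1}\Ccal$, every $\Ccal'$-filtered module again lies in $^{\perp_1}\Ccal$. Since $\Lcal\subseteq{}^{\perp_1}(\Lcal^{\perp_1})=\Acal$ directly from the definition of $\Lcal^{\perp_1}$, this immediately gives that every $\Lcal$-filtered module lies in $\Acal$; and as $\Acal$ has the form $^{\perp_1}(-)$ it is closed under direct summands, so the same holds for all direct summands of $\Lcal$-filtered modules. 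This settles the penultimate assertion.

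For completeness I would use the transfinite pushout construction. Given $X\in Mod(A)$, one builds a continuous increasing chain $X=M_0\subseteq M_1\subseteq\cdots\subseteq M_\alpha\subseteq\cdots$ as follows: at a successor step, index by all pairs $(L,\xi)$ with $L\in\Lcal$ and $\xi\in Ext^1_A(L,M_\alpha)$, choose for each such pair a representing short exact sequence $0\to M_\alpha\to Y_{(L,\xi)}\to L\to 0$, and let $M_{\alpha+1}$ be the pushout of the diagram $\bigoplus_{(L,\xi)}Y_{(L,\xi)}\longleftarrow\bigoplus_{(L,\xi)}M_\alpha\xrightarrow{\ \Sigma\ }M_\alpha$ whose left map is the coproduct of the inclusions $M_\alpha\hookrightarrow Y_{(L,\xi)}$ and whose right map is the codiagonal. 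Then $M_{\alpha+1}/M_\alpha$ is a direct sum of copies of modules in $\Lcal$, and a short diagram chase shows that every $\xi\in Ext^1_A(L,M_\alpha)$ maps to $0$ in $Ext^1_A(L,M_{\alpha+1})$. Choosing a regular cardinal $\kappa$ strictly larger than $|A|$, than $|\Lcal|$, and than $|L|$ for every $L\in\Lcal$, one verifies that $Ext^1_A(L,M_\kappa)=0$ for all $L\in\Lcal$: any element is represented by an extension $0\to M_\kappa\to Y\to L\to 0$ in which $Y$ is generated modulo $M_\kappa$ by fewer than $\kappa$ elements, so the extension already arises from some $M_\alpha$ with $\alpha<\kappa$ and therefore dies at the next step. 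Setting $B_X:=M_\kappa\in\Lcal^{\perp_1}=\Bcal$ and $A_X:=M_\kappa/X$, the short exact sequence $0\to X\to B_X\to A_X\to 0$ has $A_X$ an $\Lcal$-filtered module, hence in $\Acal$ by the first paragraph; this is the first sequence of Definition \ref{def cotorsion pairs}, with $A_X$ of the required form.

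For the second sequence I would fix a free presentation $0\to K\to F\to X\to 0$, apply the construction above to $K$ to obtain $0\to K\to B_K\to A_K\to 0$, and form the pushout of $B_K\longleftarrow K\hookrightarrow F$. This produces short exact sequences $0\to B_K\to A^X\to X\to 0$ and $0\to F\to A^X\to A_K\to 0$; since $F$ is projective and $A_K\in\Acal$, and $\Acal$ is closed under extensions, the second sequence forces $A^X\in\Acal$, so the first is the desired $0\to B^X\to A^X\to X\to 0$ with $B^X:=B_K\in\Bcal$. Thus $(\Acal,\Bcal)$ is complete. Finally, if $A\in\Lcal$ then the free module $F$ is $\{A\}$-filtered, hence $\Lcal$-filtered, so $A^X$---being an extension of the $\Lcal$-filtered module $A_K$ by $F$---is $\Lcal$-filtered. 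Applying this with an arbitrary $N\in\Acal$ in place of $X$, one obtains $0\to B^N\to A^N\to N\to 0$ with $A^N$ $\Lcal$-filtered and $B^N\in\Bcal$; since $N\in\Acal={}^{\perp_1}\Bcal$ we have $Ext^1_A(N,B^N)=0$, so this sequence splits and $N$ is a direct summand of the $\Lcal$-filtered module $A^N$. Combined with the penultimate assertion, this proves the last statement.

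The step I expect to require the most care is the cardinality bookkeeping that forces the chain $(M_\alpha)$ to terminate, i.e.\ that $Ext^1_A(L,M_\kappa)=0$ at the chosen $\kappa$; this, together with Eklof's Lemma, is classical and carried out in detail in \cite{ET}, which I would simply cite in the paper.
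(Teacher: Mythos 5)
Your argument is correct, and it is precisely the standard machinery behind the citation: the paper itself gives no proof beyond referring to G\"obel--Trlifaj (Theorem 3.2.1 and Corollaries 3.2.3, 3.2.4 of \cite{GT}), whose content is exactly the Eklof Lemma, the Eklof--Trlifaj transfinite construction, Salce's pushout trick for the second approximation sequence, and the splitting argument giving that members of $\Acal$ are summands of $\Lcal$-filtered modules when $A\in\Lcal$ --- all of which you reproduce faithfully. The only stylistic deviation from the usual presentation in \cite{ET,GT} is that you index the successor-step pushout by pairs $(L,\xi)$ with $\xi\in Ext^1_A(L,M_\alpha)$ and chosen representing extensions, rather than by homomorphisms from a fixed syzygy $K_L$ of each $L\in\Lcal$ into $M_\alpha$; the two variants are interchangeable, and your verification that each $\xi$ dies in $Ext^1_A(L,M_{\alpha+1})$ is sound (the canonical map $Y_{(L,\xi)}\to M_{\alpha+1}$ extends the inclusion of $M_\alpha$, so the pushed-out extension splits). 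You also correctly isolate the one genuinely delicate point --- the regularity/presentability bookkeeping ensuring $Ext^1_A(L,M_\kappa)=0$, which needs $L$ to be ${<}\kappa$-presented, guaranteed by $\kappa>|A|,|L|$ --- and deferring that to \cite{ET} is entirely in keeping with how the paper handles the whole proposition.
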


\begin{proof}
See~\cite[Theorem 3.2.1 and Corollaries 3.2.3 and 3.2.4]{GT}.
\end{proof}

Now take a tilting module $T$ and consider the cotorsion pair $(^{\perp_1}(T^{\perp_1}),T^{\perp_1})$. Then $^{\perp_1}(T^{\perp_1})\cap T^{\perp_1}=Add(T)$ by \cite[Lemma 2.4]{AC}. We can define the set $\Scal$ from Theorem \ref{tilting finite type} to contain precisely the finitely presented $A$-modules from $^{\perp_1}(T^{\perp_1})$. In fact, by Proposition \ref{prop cotorsion pairs}, all modules in $\Scal$ have projective dimension at most one (since the class of all modules of projective dimension at most one is itself closed under filtrations) and, by the main result in \cite{BH}, we have $\Scal^{\perp_1}=T^{\perp_1}$ (see also \cite{BS}). Conversely, starting with a set $\Scal$ of finitely presented $A$-modules of projective dimension at most one, we consider the complete cotorsion pair $(^{\perp_1}(\Scal^{\perp_1}),\Scal^{\perp_1})$. It follows that the class $\Scal^{\perp_1}$ is closed for coproducts in $Mod(A)$ and that the class $^{\perp_1}(\Scal^{\perp_1})$ contains only $A$-modules of projective dimension at most one. Thus, $\Scal^{\perp_1}$ is a tilting class by \cite[Theorem 4.1]{AC} (see also~\cite[Theorem 2.2]{AHT}).

In this section, we are interested in studying Hom-Ext-orthogonal subcategories to partial tilting modules and to sets of modules of projective dimension at most one. In doing so, we obtain abelian categories that will be of interest in the forthcoming parts of the paper. We prove the following main result.

\begin{theorem}\label{Main 1}
Let $A$ be a ring.
\begin{enumerate}
\item Let $\Scal$ be a set of finitely presented $A$-modules of projective dimension at most one. Then there is a partial tilting $A$-module $T_1$ such that $\Scal^{\perp_1}=T_1^{\perp_1}$ and $\Scal^\perp=T_1^\perp$.
\item Let $T_1$ be a partial tilting $A$-module. Then there is a set $\Scal$ of countably presented $A$-modules of projective dimension at most one such that $\Scal^{\perp_1}=T_1^{\perp_1}$ and $\Scal^\perp=T_1^\perp$.
\end{enumerate}
\end{theorem}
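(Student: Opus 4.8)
The plan is to connect the two parts through the theory of (complete) cotorsion pairs generated by a set, using the fact that a tilting class $\mathcal{T}$ has an associated canonical cotorsion pair whose left-hand class contains only modules of projective dimension at most one. For part (1), I would start from the set $\mathcal{S}$ of finitely presented modules of projective dimension at most one and form the complete cotorsion pair $(\mathcal{A},\mathcal{B}) = ({}^{\perp_1}(\mathcal{S}^{\perp_1}), \mathcal{S}^{\perp_1})$ from Proposition \ref{prop cotorsion pairs}; by the discussion following that proposition, $\mathcal{S}^{\perp_1}$ is a tilting class, so it equals $T_1^{\perp_1}$ for some (partial, even genuine) tilting module $T_1$, which we may take to lie in $\mathcal{A}\cap T_1^{\perp_1}= Add(T_1)$. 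This already gives $\mathcal{S}^{\perp_1}=T_1^{\perp_1}$; the content is to upgrade this to $\mathcal{S}^\perp = T_1^\perp$. The key observation is that $Y\in\mathcal{S}^\perp$ means $Y\in\mathcal{S}^{\perp_1}$ \emph{and} $Hom_A(S,Y)=0$ for all $S\in\mathcal{S}$, and similarly for $T_1$; since the $\perp_1$-parts already agree, it suffices to show that the $\perp_0$-conditions agree \emph{relative to} the common class $\mathcal{S}^{\perp_1}$. For this I would use completeness of the cotorsion pair: given $Y \in \mathcal{S}^{\perp_1}$, a special $\mathcal{A}$-precover $0\to B^Y\to A^Y\to Y\to 0$ with $A^Y$ being $\mathcal{S}$-filtered (hence $T_1$-ish) and $B^Y\in\mathcal{S}^{\perp_1}$, together with a dual argument, lets one replace $\mathcal{S}$ and $Add(T_1)$ by the same class $\mathcal{A}$ of test objects for $Hom_A(-,Y)$; concretely, $Hom_A(S,Y)=0$ for all $S\in\mathcal{S}$ iff $Hom_A(A,Y)=0$ for all $A\in\mathcal{A}$ (using that every $A\in\mathcal{A}$ is a summand of an $\mathcal{S}$-filtered module and $Hom_A(-,Y)$ is left exact, so vanishing propagates along the filtration), and likewise for $Add(T_1)\subseteq\mathcal{A}$. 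Hence both $\mathcal{S}^\perp$ and $T_1^\perp$ coincide with $\{Y\in\mathcal{S}^{\perp_1}\mid Hom_A(\mathcal{A},Y)=0\}$.

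For part (2), I would run the construction in reverse: starting from the partial tilting module $T_1$, form the cotorsion pair $(^{\perp_1}(T_1^{\perp_1}), T_1^{\perp_1})$, which by \cite[Lemma 2.4]{AC} has the property that its left and right classes intersect in $Add(T_1)$, and the left class consists of modules of projective dimension at most one. The tilting class $T_1^{\perp_1}$ is of finite type by Theorem \ref{tilting finite type}, so there is a set $\mathcal{S}_0$ of finitely presented modules of projective dimension at most one with $\mathcal{S}_0^{\perp_1} = T_1^{\perp_1}$. This handles the $\perp_1$-condition but not yet $\perp_0$: the point is to enlarge $\mathcal{S}_0$ to a set $\mathcal{S}$ still consisting of modules of projective dimension at most one (now only countably presented) so that additionally $\mathcal{S}^{\perp_0}$ captures $T_1^{\perp_0}$ within $T_1^{\perp_1}$. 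Here I would invoke a Hill-lemma / directed-union argument: since $T_1 \in {}^{\perp_1}(T_1^{\perp_1})$, by Proposition \ref{prop cotorsion pairs} (with $\mathcal{L} = \mathcal{S}_0 \cup \{A\}$, say) $T_1$ is a direct summand of an $\mathcal{S}_0\cup\{A\}$-filtered module $M$; writing $M$ as a continuous directed union of countably presented submodules that are themselves filtered by the same set, and using that countably generated submodules of filtered modules can be refined to countably presented filtered ones, one extracts a \emph{set} $\mathcal{S}_1$ of countably presented modules of projective dimension $\le 1$ such that every $Y$ with $Hom_A(\mathcal{S}_1, Y)=0$ and $Y\in T_1^{\perp_1}$ satisfies $Hom_A(T_1,Y)=0$, and conversely. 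Setting $\mathcal{S} := \mathcal{S}_0 \cup \mathcal{S}_1$ then gives $\mathcal{S}^{\perp_1}=\mathcal{S}_0^{\perp_1}=T_1^{\perp_1}$ and $\mathcal{S}^\perp = T_1^\perp$.

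The main obstacle I anticipate is the cardinality/countability bookkeeping in part (2): turning the statement ``$T_1$ is a summand of an $\mathcal{S}_0$-filtered module'' into a \emph{set} of \emph{countably presented} modules that jointly detect the vanishing of $Hom_A(T_1,-)$ on $T_1^{\perp_1}$. One has to be careful that passing to countably generated submodules of a filtered module does not destroy the filtration property — this is exactly where the Hill lemma (or the Eklof–Trlifaj style analysis of $\mathcal{L}$-filtered modules) is needed — and that countably generated modules of projective dimension $\le 1$ occurring this way are in fact countably \emph{presented}, which uses that submodules in question sit inside a module of projective dimension $\le 1$ and Auslander's characterization of such modules via countable presentations. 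The $\perp_0$-bookkeeping, i.e.\ ensuring that $Hom_A$ rather than merely $Ext^1_A$ is controlled, is handled uniformly by the left-exactness of $Hom_A(-,Y)$ together with the filtration, as in part (1). Part (1) itself I expect to be essentially formal once the cotorsion-pair machinery and the finite-type theorem are in place.
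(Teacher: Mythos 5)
Your part (1) breaks down at the choice of $T_1$. You propose to take $T_1$ to be a (``partial, even genuine'') tilting module with $T_1^{\perp_1}=\Scal^{\perp_1}$, but for a genuine tilting module one always has $T_1^\perp = T_1^{\perp_0}\cap Gen(T_1)=0$, since any $Y\in Gen(T_1)$ with $Hom_A(T_1,Y)=0$ is an epimorphic image of a coproduct of copies of $T_1$ and hence zero. So your conclusion $\Scal^\perp=T_1^\perp$ would force $\Scal^\perp=0$, which is false in general: for $A=\mathbb{Z}$ and $\Scal=\{\mathbb{Z}_p\}$ (the paper's own example) one has $\Scal^\perp=Mod(\mathbb{Z}[1/p])\neq 0$. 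The auxiliary claim is also wrong: ${}^{\perp_1}(\Scal^{\perp_1})$ contains all projective modules (and, by Proposition~\ref{prop cotorsion pairs}, its members are summands of $\Scal\cup\{A\}$-filtered, not $\Scal$-filtered, modules), so $\{Y\in\Scal^{\perp_1}\mid Hom_A(\mathcal{A},Y)=0\}=0$ and cannot equal $\Scal^\perp$. The whole content of (1) is the \emph{choice} of the partial tilting module: the paper takes, for each $S\in\Scal$, a special approximation $0\to S\to\nabla_S\to\nabla_S/S\to 0$ with $\nabla_S\in\Scal^{\perp_1}$ and $\nabla_S/S$ (hence $\nabla_S$) $\Scal$-filtered, and sets $T_1=\bigoplus_S\nabla_S$. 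Then $\Scal^{\perp_1}=T_1^{\perp_1}$ follows from the long exact sequence plus $Ext^2_A(\nabla_S/S,-)=0$ on $\Scal^{\perp_1}$, and $\Scal^\perp=T_1^\perp$ follows in one direction by transfinite induction along the $\Scal$-filtration of $T_1$ (left exactness of $Hom$), and in the other from $Hom_A(\nabla_S,X)\to Hom_A(S,X)\to Ext^1_A(\nabla_S/S,X)=0$ (Eklof lemma). This specific embedding of each $S$ into a module of $Add$-type with filtered cokernel is what your test-class swap cannot replace.

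Part (2) as written also has a gap at the decisive step. You need \emph{both} implications between $Hom_A(\Scal_1,Y)=0$ and $Hom_A(T_1,Y)=0$ inside the common $\perp_1$-class, but your construction only plausibly gives one of them: the modules of $\Scal_1$ are extracted from a filtered module $M$ having $T_1$ merely as a direct summand, so nothing controls $Hom$ from those subquotients when only $Hom_A(T_1,Y)=0$ is assumed; worse, with $\Lcal=\Scal_0\cup\{A\}$ the filtration layers may be copies of $A$, and $Hom_A(A,Y)=Y\neq 0$ in general, so such layers can never be admitted into $\Scal$. The paper avoids all of this by filtering $T_1$ \emph{itself}: by \cite[Proposition 3.3]{BET} there is a filtration $(Y_\alpha)$ of $T_1$ with countably presented consecutive quotients lying in ${}^{\perp_1}(T_1^{\perp_1})$, and $\Scal:=\{Y_{\alpha+1}/Y_\alpha\}$. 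Then $\Scal^{\perp_1}=T_1^{\perp_1}$ is immediate from Proposition~\ref{prop cotorsion pairs}, $\Scal^\perp\subseteq T_1^\perp$ follows by the same transfinite induction as in (1), and for $X\in T_1^\perp$ one gets $Hom_A(Y_\alpha,X)=0$ from the exact sequence $Hom_A(T_1,X)\to Hom_A(Y_\alpha,X)\to Ext^1_A(T_1/Y_\alpha,X)$, where the last term vanishes because $T_1/Y_\alpha$ is $\Scal$-filtered and $X\in\Scal^{\perp_1}$; this is exactly the converse direction your sketch asserts but does not prove. Your instinct that the Hill-lemma/countable-type machinery enters is right, but it must be applied to a filtration of $T_1$, not of an ambient filtered module, and the set $\Scal_0$ of finitely presented modules plays no role in the paper's argument.
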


\begin{proof}
$(1)$ Consider the complete tilting cotorsion pair $(^{\perp_1}(\Scal^{\perp_1}),\Scal^{\perp_1})$. For every object $S\in\Scal$, there is a short exact approximation sequence of the form
$$\xymatrix{0\ar[r] & S\ar[r] & \nabla_S\ar[r] & \nabla_S/S\ar[r] & 0}$$
where $\nabla_S$ belongs to $\Scal^{\perp_1}$ and $\nabla_S/S$ is $\Scal$-filtered. In particular, $\nabla_S$ is also $\Scal$-filtered and, hence, $\nabla_S$ lies in the intersection $^{\perp_1}(\Scal^{\perp_1})\cap \Scal^{\perp_1}=Add(T)$ where $T$ is tilting with $Gen(T)=T^{\perp_1}=\Scal^{\perp_1}$. Define $T_1$ to be $\bigoplus_{S\in\Scal}\nabla_S$. It follows that $\Scal^{\perp_1}\subseteq T_1^{\perp_1}$.
We show that $\Scal^{\perp_1}\supseteq T_1^{\perp_1}$ which among others implies that $T_1$ is partial tilting. By applying the functor $Hom_A(-,X)$ for $X\in T_1^{\perp_1}$ to the above exact sequence, we get
$$\xymatrix{\cdots \ar[r] & Ext_A^1(\nabla_S,X)\ar[r] & Ext_A^1(S,X)\ar[r] & Ext_A^2(\nabla_S/S,X)\ar[r] & \cdots}$$
By assumption, we have $Ext_A^1(\nabla_S,X)=0$ and, moreover, since $\nabla_S/S$ is $\Scal$-filtered and all the modules in $\Scal$ have projective dimension at most one, it follows that also $Ext_A^2(\nabla_S/S,X)=0$. Hence, we get $Ext_A^1(S,X)=0$ and, therefore, $\Scal^{\perp_1}=T_1^{\perp_1}$. 

It remains to prove that $\Scal^\perp = T_1^\perp$. We can use an argument analogous to the proof of~\cite[Theorem 3.2(3)]{AA}.
First, let $X$ be in $\Scal^\perp$. Since, by construction, $T_1$ is $\Scal$-filtered, we can choose a particular filtration $(Y_\alpha\mid \alpha\le\lambda)$ with $T_1 = Y_\lambda$. An easy transfinite induction argument shows that $Hom_A(Y_\alpha,X) = 0$ for each $\alpha\le\lambda$. Indeed, this is clear for $\alpha = 0$ and for limit ordinals we use that
$$Hom_A(Y_\alpha,X) = Hom_A(\varinjlim_{\beta<\alpha} Y_\beta,X)\cong\varprojlim_{\beta<\alpha} Hom_A(Y_\beta,X) = 0.$$ 
If $\alpha = \beta+1$ is an ordinal successor, then $Hom_A(Y_\alpha,X) = 0$ since $Hom_A(Y_\beta,X) = 0$ by the inductive hypothesis and $Y_\alpha/Y_\beta \in \Scal$. Applying the conclusion to $T_1 = Y_\lambda$ we obtain $X \in T_1^\perp$.

Conversely, take $X$ in $T_1^\perp\subseteq\Scal^{\perp_1}$. For every $S$ in $\Scal$ we get an exact sequence
$$\xymatrix{\cdots \ar[r] & Hom_A(\nabla_S,X)\ar[r] & Hom_A(S,X)\ar[r] & Ext_A^1(\nabla_S/S,X)\ar[r] & \cdots}$$
By assumption, we have $Hom_A(\nabla_S,X)=0$ and, moreover, since $\nabla_S/S$ is $\Scal$-filtered and $X\in \Scal^{\perp_1}$, it follows from Proposition~\ref{prop cotorsion pairs} that $Ext_A^1(\nabla_S/S,X)=0$. Hence, we get $Hom_A(S,X)=0$ and, thus, $X\in\Scal^{\perp}$. This finishes the proof of statement $(1)$.

$(2)$ Let $T_1$ be partial tilting and consider the complete tilting cotorsion pair $(^{\perp_1}(T_1^{\perp_1}),T_1^{\perp_1})$. Since $T_1$ is of projective dimension at most one, so are all the modules in $^{\perp_1}(T_1^{\perp_1})$. By \cite[Proposition 3.3]{BET}, there is a filtration $(Y_\alpha\mid \alpha\le\lambda)$ of $T_1$ such that for all $\alpha+1\le\lambda$ the quotient $Y_{\alpha+1}/Y_\alpha$ is countably presented and lies in $^{\perp_1}(T_1^{\perp_1})$. We set $\Scal:=\{Y_{\alpha +1}/Y_\alpha\mid\alpha<\lambda\}$. Using that $\Scal$ is contained in $^{\perp_1}(T_1^{\perp_1})$ and, on the contrary, $T_1\in{^{\perp_1}(\Scal^{\perp_1})}$ by Proposition~\ref{prop cotorsion pairs}, it follows that $\Scal^{\perp_1}=T_1^{\perp_1}$.

Now let $X$ be in $\Scal^\perp$. Since, by construction, $T_1$ is $\Scal$-filtered, $X$ lies in $T_1^\perp$ by the same transfinite induction as in the proof of statement $(1)$. Conversely, take $X$ in $T_1^\perp\subseteq\Scal^{\perp_1}$. It is enough to prove that $X$ is in $Y_\alpha^{\perp_0}$ for all $\alpha\le\lambda$, as then automatically $X \in (Y_{\alpha+1}/Y_\alpha)^{\perp_0}$ for each $\alpha<\lambda$. Consider for $\alpha\le\lambda$ the short exact sequence
$$\xymatrix{0\ar[r] & Y_\alpha\ar[r] & T_1\ar[r] & T_1/Y_\alpha\ar[r] & 0}$$
and apply to it the functor $Hom_A(-,X)$. We obtain the exact sequence
$$\xymatrix{\cdots\ar[r] & Hom_A(T_1,X)\ar[r] & Hom_A(Y_\alpha,X)\ar[r] & Ext_A^1(T_1/Y_\alpha,X)\ar[r] & \cdots}$$
By assumption, we have $Hom_A(T_1,X)=0$ and, moreover, since $T_1/Y_\alpha$ is $\Scal$-filtered and $X\in\Scal^{\perp_1}$, it follows from Proposition~\ref{prop cotorsion pairs} that $Ext_A^1(T_1/Y_\alpha,X)=0$. Hence, we get $Hom_A(Y_\alpha,X)=0$, as wanted. This finishes the proof.
\end{proof}

\begin{remark}\label{open question}
Theorem~\ref{Main 1}(2) is optimal in the sense that we cannot always choose the set $\Scal$ to contain only finitely presented modules of projective dimension at most one. We refer to Example~\ref{example not localisation concrete} for an instance of this phenomenon.
\end{remark}

It is a consequence of the proof of Theorem \ref{Main 1} that in both cases $(1)$ and $(2)$, additionally to $\Scal^{\perp_1}=T_1^{\perp_1}$ and $\Scal^\perp=T_1^\perp$, we also have $\Scal^{\perp_0}\subseteq T_1^{\perp_0}$. The converse inclusion, however, does not hold in general, as shown in the following example.

\begin{example}
Let $A=\mathbb{Z}$ be the ring of integers and consider the abelian group $\mathbb{Z}_{p}=\mathbb{Z}/p\mathbb{Z}$ for a prime $p$. We set $\Scal:=\{\mathbb{Z}_{p}\}$. Following the construction in the proof of Theorem \ref{Main 1}, an associated partial tilting module $T_1$ appears as the middle term of the following approximation sequence
$$\xymatrix{0\ar[r] & \mathbb{Z}_{p}\ar[r] & \mathbb{Z}(p^\infty)\ar[r] & \mathbb{Z}(p^\infty)\ar[r] & 0}$$
where $\mathbb{Z}(p^\infty)=\varinjlim(\frac{1}{p^n}\mathbb{Z})/\mathbb{Z}$ denotes the Pr\"ufer $p$-group. By construction, $\mathbb{Z}(p^\infty)$ is $\Scal$-filtered and, since $\mathbb{Z}(p^\infty)$ is injective, we get $Ext_\mathbb{Z}^1(\mathbb{Z}_{p},\mathbb{Z}(p^\infty))=0$. Using Theorem \ref{Main 1}, it follows that $\mathbb{Z}_{p}^{\perp_1}=\mathbb{Z}(p^\infty)^{\perp_1}$ and $\mathbb{Z}_{p}^{\perp}=\mathbb{Z}(p^\infty)^{\perp}$. Note that the latter subcategory of $Mod(\mathbb{Z})$ describes precisely the modules over the localisation of $\mathbb{Z}$ at the multiplicative set $\{p^n\mid n\geq 0\}$. Moreover, since the module $\mathbb{Z}(p^\infty)$ is $\mathbb{Z}_{p}$-filtered, we get an inclusion $\mathbb{Z}_{p}^{\perp_0}\subseteq\mathbb{Z}(p^\infty)^{\perp_0}$. We claim that this inclusion is strict. It is enough to check that $Hom_\mathbb{Z}(\mathbb{Z}(p^\infty),\mathbb{Z}_{p})=0$. But this follows from the fact that a non-zero map from $\mathbb{Z}(p^\infty)$ to $
 \mathbb{Z}_{p}$ would give rise to an infinite proper subgroup of $\mathbb{Z}(p^\infty)$ which does not exist.
\end{example} 


\section{Tilting ring epimorphisms and universal localisations}\label{section loc}

\subsection{Ring epimorphisms}\label{subsection ring epi}
We call a ring homomorphism a \textbf{ring epimorphism} if it is an epimorphism in the category of rings (with unit). Two ring epimorphisms $f\colon A\ra B$ and $g\colon A\ra C$ are said to be \textbf{equivalent} if there is a (necessarily unique) isomorphism of rings $h\colon B\ra C$ such that $h\circ f=g$. Ring epimorphisms are relevant to study full embeddings of module categories. More precisely, due to \cite{GdP,GL}, there is a bijection between equivalence classes of ring epimorphisms $A\ra B$ and so-called \textbf{bireflective} subcategories of $Mod(A)$ (i.e., full subcategories closed under kernels, cokernels, products and coproducts). The bijection is given by assigning to a ring epimorphism $f\colon A\ra B$ the essential image $\Xcal_B$ of the restriction functor along $f$. That is, $\Xcal_B$ is the full subcategory of all $A$-modules $M$ for which the action $A\to End_\Z(M)$ lifts over $f$ to a $B$-module structure $B\to End_\Z(M)$ (such a lifting, if it exists, is necessarily unique). Moreover, it was shown in \cite[Theorem 4.8]{Sch0} that $Tor_1^A(B,B)=0$ if and only if the associated bireflective subcategory is closed for extensions in $Mod(A)$. We are mainly interested in the following classes of ring epimorphisms.

\subsection{Tilting ring epimorphisms}
Fix a partial tilting $A$-module $T_1$. It follows from \cite{CTT} that the subcategory $T_1^\perp$ of $Mod(A)$ is bireflective (and closed for extensions) and, hence, there is a ring epimorphism $A\ra B$ with $Tor_1^A(B,B)=0$ and such that $\Xcal_B=T_1^{\perp}$. We say that $A\ra B$ is a \textbf{tilting ring epimorphism}. It follows directly from Theorem \ref{Main 1}(2) that tilting ring epimorphisms are of countable type:

\begin{proposition}
Let $A\ra B$ be a tilting ring epimorphism. Then there is a set $\Scal$ of countably presented $A$-modules of projective dimension at most one such that $\Xcal_B=\Scal^\perp$.
\end{proposition}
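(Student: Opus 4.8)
The plan is to combine the structure theory from the previous section with the definition of a tilting ring epimorphism. By definition, $A\ra B$ is a tilting ring epimorphism precisely when $\Xcal_B=T_1^\perp$ for some partial tilting $A$-module $T_1$. So the only thing to do is to produce a set $\Scal$ of countably presented modules of projective dimension at most one with $T_1^\perp=\Scal^\perp$, and this is exactly what Theorem~\ref{Main 1}(2) delivers.

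First I would fix a partial tilting module $T_1$ realising the tilting ring epimorphism, i.e. with $\Xcal_B=T_1^\perp$; this exists by the discussion preceding the proposition (using \cite{CTT}). Then I would invoke Theorem~\ref{Main 1}(2) applied to $T_1$: it yields a set $\Scal$ of countably presented $A$-modules of projective dimension at most one such that $\Scal^{\perp_1}=T_1^{\perp_1}$ and $\Scal^\perp=T_1^\perp$. Chaining the two equalities of subcategories gives $\Xcal_B=T_1^\perp=\Scal^\perp$, which is the assertion.

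There is essentially no obstacle here — the statement is an immediate corollary of Theorem~\ref{Main 1}(2), and the proof is a two-line bookkeeping argument. If anything needs care, it is only the remark that the partial tilting module $T_1$ witnessing a given tilting ring epimorphism is fixed at the outset (different choices of $T_1$ with the same perpendicular category would produce different but equally valid sets $\Scal$), and that the set $\Scal$ cannot in general be taken to consist of finitely presented modules, as noted in Remark~\ref{open question}.

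\begin{proof}
By definition of a tilting ring epimorphism, there is a partial tilting $A$-module $T_1$ with $\Xcal_B=T_1^\perp$. Applying Theorem~\ref{Main 1}(2) to $T_1$, we obtain a set $\Scal$ of countably presented $A$-modules of projective dimension at most one with $\Scal^\perp=T_1^\perp$. Hence $\Xcal_B=T_1^\perp=\Scal^\perp$.
\end{proof}
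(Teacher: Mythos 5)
Your proof is correct and is exactly the paper's argument: the paper states this proposition as an immediate consequence of Theorem~\ref{Main 1}(2), applied to a partial tilting module $T_1$ with $\Xcal_B=T_1^\perp$ coming from the definition of a tilting ring epimorphism. Nothing further is needed.
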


\begin{example} \label{expl proj}
Let $A$ be a left-noetherian ring and let $A\ra A/I$ be a surjective ring epimorphism with $Tor_1^A(A/I,A/I)=0$. Using $Tor_1^A(A/I,A/I)\cong I/I^2$, it follows that $I$ is idempotent. Since, by assumption, $I$ is a finitely generated left $A$-module, $I$ is the trace ideal of a countably generated projective $A$-module $P$ (see \cite{W}). In particular, we get $\Xcal_{A/I}=P^\perp$ and, hence, $A\ra A/I$ is a tilting ring epimorphism. 
\end{example}

\begin{example} \label{tilting pd1}
Let $A\ra B$ be an injective ring epimorphism with $Tor_1^A(B,B)=0$ and such that the projective dimension of the $A$-module $B$ is at most one. Then, by \cite[Theorem 3.5]{AS}, the $A$-module $B\oplus B/A$ is tilting. Moreover, since $A\ra B$ is injective, $\Xcal_B$ coincides with $B/A^\perp$. Hence, $A\ra B$ is a tilting ring epimorphism.
\end{example}

\subsection{Universal localisations}\label{subsection loc}
The concept goes back to \cite[Theorem 4.1]{Sch0}.

\begin{theorem}\label{uni loc}
Let $\Sigma$ be a set of maps in $proj(A)$. Then there is a ring $A_\Sigma$ and a ring homomorphism $f:A\rightarrow A_\Sigma$ such that
\begin{enumerate}
\item $A_\Sigma \otimes_A \sigma$ is an isomorphism for all $\sigma\in\Sigma$;
\item every ring homomorphism $g:A\rightarrow B$ such that $B\otimes_A \sigma$ is an isomorphism for all $\sigma\in\Sigma$ factors uniquely through $f$.
\end{enumerate}
\end{theorem}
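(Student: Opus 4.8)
The plan is to build $A_\Sigma$ explicitly via generators and relations and then verify the universal property directly. First I would fix, for each $\sigma \in \Sigma$, a matrix presentation: writing $\sigma\colon P \to Q$ with $P, Q \in proj(A)$, I would choose finitely generated free modules $A^m \twoheadrightarrow P$ and $A^n \twoheadrightarrow Q$, lift $\sigma$ to a map $A^m \to A^n$, and — by adding trivial summands to absorb the projective complements $P'$ with $P \oplus P' = A^m$ and $Q'$ with $Q \oplus Q' = A^n$ — reduce to the case where each $\sigma$ is represented by a genuine $n_\sigma \times m_\sigma$ matrix over $A$. This reduction is harmless for the universal property, since inverting $\sigma$ is equivalent to inverting $\sigma \oplus \mathrm{id}_{P'} \oplus (\text{zero on } Q')$ suitably arranged, and it lets me work purely with matrices.

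Next I would construct $A_\Sigma$ as follows: take the free $A$-ring on a set of new symbols $\{x^\sigma_{ij} \mid \sigma \in \Sigma,\ 1 \le i \le m_\sigma,\ 1 \le j \le n_\sigma\}$, i.e.\ the coproduct $A * A\langle x^\sigma_{ij}\rangle$ in the category of rings, and then quotient by the two-sided ideal generated by the entries of the matrix equations $X^\sigma M^\sigma = I_{m_\sigma}$ and $M^\sigma X^\sigma = I_{n_\sigma}$, where $M^\sigma$ is the matrix of $\sigma$ and $X^\sigma = (x^\sigma_{ij})$. Call the quotient $A_\Sigma$ and let $f\colon A \to A_\Sigma$ be the structural map. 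The point of imposing both one-sided relations is that in $A_\Sigma$ the matrix $M^\sigma$ becomes a two-sided invertible $n_\sigma \times m_\sigma$ matrix, which forces $m_\sigma = n_\sigma$ to be consistent after base change and makes $A_\Sigma \otimes_A \sigma$ an isomorphism — this gives condition (1), after translating the matrix statement back through the chosen presentations and noting that $A_\Sigma \otimes_A -$ is right exact and sends the free presentations to presentations over $A_\Sigma$.

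For condition (2), suppose $g\colon A \to B$ is a ring homomorphism with $B \otimes_A \sigma$ an isomorphism for every $\sigma$. Then $B \otimes_A M^\sigma$ is an invertible matrix over $B$; let $N^\sigma$ be its inverse. Define $\bar g\colon A_\Sigma \to B$ on the free $A$-ring by sending $x^\sigma_{ij}$ to the $(i,j)$-entry of $N^\sigma$ and extending by $g$ on $A$; the defining relations $X^\sigma M^\sigma = I$, $M^\sigma X^\sigma = I$ are killed precisely because $N^\sigma$ is a two-sided inverse of $B \otimes_A M^\sigma$, so $\bar g$ descends to $A_\Sigma$, and by construction $\bar g \circ f = g$. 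Uniqueness of $\bar g$ follows because $A_\Sigma$ is generated as a ring by $f(A)$ together with the entries $x^\sigma_{ij}$, and any factorisation must send $x^\sigma_{ij}$ to an entry of the inverse of $B \otimes_A M^\sigma$, which is unique. The main obstacle — and the only genuinely non-formal point — is making the matrix reduction in the first step watertight: one must check that $B \otimes_A \sigma$ is an isomorphism for the original $\sigma\colon P \to Q$ if and only if the associated matrix becomes invertible over $B$, which requires keeping careful track of the splittings $P \oplus P' \cong A^m$, $Q \oplus Q' \cong A^n$ and the fact that a direct summand of a free module over any ring is again directly complemented. Everything after that is bookkeeping with noncommutative generators and relations.
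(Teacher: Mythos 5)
Your core engine --- adjoining a matrix of fresh symbols $X^\sigma$ subject to $X^\sigma M^\sigma=I$ and $M^\sigma X^\sigma=I$, then verifying (1) and (2) via uniqueness of two-sided matrix inverses --- is the classical construction and works verbatim when every $\sigma$ is a map between finitely generated \emph{free} modules (the paper gives no proof of this theorem, only the reference to Schofield). The genuine gap is exactly the step you single out as the only non-formal one: the reduction of $\sigma\colon P\to Q$ in $proj(A)$ to a matrix. Your justification, that inverting $\sigma$ is equivalent to inverting $\sigma\oplus\mathrm{id}_{P'}\oplus(\text{zero on }Q')$, is false: for any ring homomorphism $A\to B$, a direct sum $\theta\oplus\theta'$ becomes invertible after applying $B\otimes_A-$ if and only if both summands do, so the zero summand additionally forces $B\otimes_AQ'=0$, which is strictly stronger than inverting $\sigma$. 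Concretely, take $A=k\times k$, $P=Q=k\times 0$ and $\sigma=\mathrm{id}_P$: the universal localisation is $A$ itself, but with the cover $A\twoheadrightarrow P$ the zero-block lift of $\sigma$ is right multiplication by the idempotent $(1,0)$, and adjoining a universal two-sided inverse of $(1,0)$ yields $k$, not $A$. (The outcome even depends on which lift you choose, which already shows the reduction is not harmless.) Nor can any ``suitable arrangement'' rescue it in general: an augmentation $\sigma\oplus\mathrm{id}_X$ has free domain and codomain only if $[P]-[Q]\in\Z\cdot[A]$ in $K_0(A)$, which fails in the same example for $P=k\times 0$ and $Q=0$. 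So the case of maps of finitely generated projectives does not follow from the free case by stabilisation, and your verification of (1) and (2), carried out for the substitute matrices, does not transfer back to the original set $\Sigma$.

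The repair keeps your generators-and-relations scheme but encodes the projectives instead of discarding them: choose idempotent matrices $e\in M_{m}(A)$ and $e'\in M_{n}(A)$ with $P\cong\Img(e)$ and $Q\cong\Img(e')$, represent $\sigma$ by a matrix $M$ normalised so that precomposition with $e$ and postcomposition with $e'$ leave it unchanged, and impose on the new symbols the relations $X^\sigma=eX^\sigma e'$ (again in your chosen row/column convention), $X^\sigma M=e$ and $MX^\sigma=e'$. These relations say precisely that $A_\Sigma\otimes_A\sigma$ admits a two-sided inverse $A_\Sigma\otimes_AQ\to A_\Sigma\otimes_AP$; with this change, your arguments for (1), for the existence of the factorisation in (2), and for its uniqueness (the inverse of an isomorphism of the base-changed projectives is unique, and $A_\Sigma$ is generated by the image of $A$ together with the new entries) go through essentially unchanged. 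Incidentally, the parenthetical claim that invertibility ``forces $m_\sigma=n_\sigma$ after base change'' is neither needed nor true in general, since $A_\Sigma$ may be the zero ring or fail to have invariant basis number; it should simply be dropped.
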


We say that the ring $A_\Sigma$ in Theorem \ref{uni loc} is the \textbf{universal localisation of $A$ at $\Sigma$}. It is well-known that the homomorphism $f\colon A\rightarrow A_\Sigma$ is a ring epimorphism with $Tor_1^A(A_\Sigma,A_\Sigma)=0$ (see \cite[Theorem 4.7]{Sch0}). The essential image of the associated restriction functor will be denoted by $\Xcal_\Sigma$; it consists of all $X\in Mod(A)$ such that $Hom_A(\sigma,X)$ is an isomorphism (\cite[Theorem 4.7]{Sch0}, see also \cite[Lemma 4.4]{BS-smash}).

\begin{example}\label{classical vs universal loc}
Let $A$ be a commutative ring and $\Sigma \subseteq A$ be a multiplicative set. We can interpret each element of $\Sigma$ as a map $\sigma\cdot -\colon A \to A$ in $proj(A)$. Hence, the classical localisation of $A$ at $\Sigma$ is an instance of a universal localisation. In this case, one naturally expresses $\Xcal_\Sigma$ as the intersection of a torsion class, namely $\Dcal_\Sigma = \{ M \in Mod(A) \mid \sigma M = M \}$ of all $\Sigma$-divisible modules, and the torsion-free class $\Fcal_\Sigma$ of all modules for which the action of each $\sigma\in\Sigma$ is injective.

If, moreover, $A$ is local, then the classes of universal localisations and classical localisations coincide. Indeed, every projective $A$-module is then free, and universal localisations boil down to making certain matrices over $A$ universally invertible. However, making a square matrix over a commutative ring invertible is equivalent to making its determinant invertible, and a non-square matrix $M$ cannot be invertible over a non-zero commutative ring $A$, or else $M$ would be invertible over the residue field $\Kbb(\mathfrak{p})$ at any fixed prime ideal $\mathfrak{p}\subseteq A$, which is absurd.

For non-local commutative rings, however, the class of universal localisations is in general broader than that of classical localisations. For Dedekind domains this is determined by the ideal class group, see~\cite[Proposition 6.8 and Corollary 6.17]{AS}.
\end{example}

An analogous pair of torsion pairs involving classes of torsion and divisible modules as in the above example can be associated with any set $\Sigma$ of maps in $proj(A)$ for an arbitrary ring $A$. 
One of the torsion pairs, $(\Dcal_\Sigma,\Rcal_\Sigma)$, is given by 
$$\Dcal_\Sigma:=\{X\in Mod(A) \mid Hom_A(\sigma,X)\text{\, is surjective\,} \forall\sigma\in\Sigma\}.$$
The modules in $\Dcal_\Sigma$ (respectively, $\Rcal_\Sigma$) will be called \textbf{$\Sigma$-divisible} (respectively, \textbf{$\Sigma$-reduced}).
In the second torsion pair, $(\Tcal_\Sigma,\Fcal_\Sigma)$, the torsion class is generated by all the cokernels of maps in $\Sigma$. Hence, we have
$$\Fcal_\Sigma:=\{X\in Mod(A) \mid Hom_A(\sigma,X)\text{\, is injective\,} \forall\sigma\in\Sigma\}.$$
The modules in $\Fcal_\Sigma$ (respectively, $\Tcal_\Sigma$) will be called \textbf{$\Sigma$-torsion-free} (respectively, \textbf{$\Sigma$-torsion}).
In particular, it follows that $\Xcal_\Sigma=\Dcal_\Sigma\cap\Fcal_\Sigma.$
Similar torsion and torsion-free classes have appeared before in the context of universal localisations (see, for example, \cite{Sch1} and \cite[Chapter 4]{AS}), but it seems that a general discussion of their properties and uses is still missing. The next easy example shows that the above torsion pairs depend on the set $\Sigma$ and not only on the universal localisation $A_\Sigma$ of~$A$.

\begin{example}
Let $P$ be a non-zero finitely generated projective $A$-module and consider the sets $\Sigma=\{0\ra P\}$ and $\Sigma'=\{P\ra 0\}$. Clearly, we have $A_\Sigma=A_{\Sigma'}$. The torsion class $\Dcal_\Sigma$ equals $Mod(A)$. But $\Dcal_{\Sigma'}$, on the other hand, is given by $\Xcal_\Sigma\subsetneq Mod(A)$. Similarly, we have $\Fcal_\Sigma\neq\Fcal_{\Sigma'}$.
\end{example}

We can pass information from $\Sigma$ to the class of $\Sigma$-divisible modules. 

\begin{lemma}\label{mono}
Let $\Sigma$ be a set of maps in $proj(A)$. The following are equivalent.
\begin{enumerate}
\item The set $\Sigma$ consists of monomorphisms;
\item $\Dcal_\Sigma$ is a tilting class.
\end{enumerate}
Moreover, every tilting class in $Mod(A)$ arises in this way.
\end{lemma}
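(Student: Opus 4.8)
The equivalence $(1)\Leftrightarrow(2)$ together with the final sentence splits naturally into three tasks. First I would establish $(1)\Rightarrow(2)$. Write $\Sigma = \{\sigma_i\colon P_i\to Q_i\mid i\in I\}$ with $P_i,Q_i\in proj(A)$, and assume each $\sigma_i$ is a monomorphism, so that we have short exact sequences $0\to P_i\xrightarrow{\sigma_i} Q_i\to C_i\to 0$ with $C_i$ finitely presented of projective dimension at most one. Set $\Scal=\{C_i\mid i\in I\}$. Applying $Hom_A(-,X)$ to these sequences, the fact that $P_i,Q_i$ are projective shows that $Ext_A^1(C_i,X)=0$ if and only if $Hom_A(\sigma_i,X)$ is surjective; hence $\Scal^{\perp_1}=\Dcal_\Sigma$. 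Since the $C_i$ are finitely presented of projective dimension at most one, Theorem~\ref{tilting finite type} gives that $\Dcal_\Sigma$ is a tilting class.

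Next, $(2)\Rightarrow(1)$, which will also deliver the ``moreover'' statement once done carefully, so I would prove a slightly stronger claim: \emph{every} tilting class equals $\Dcal_\Sigma$ for a suitable set $\Sigma$ of monomorphisms in $proj(A)$. Given a tilting class $\Tcal$, use Theorem~\ref{tilting finite type} to pick a set $\Scal$ of finitely presented modules of projective dimension at most one with $\Scal^{\perp_1}=\Tcal$. For each $S\in\Scal$, choose a projective presentation $P_1\xrightarrow{f} P_0\to S\to 0$ with $P_0,P_1\in proj(A)$; since $pd_A(S)\le 1$, the kernel of $P_0\to S$ is projective, so replacing $P_1$ by that kernel we may take $f$ to be a monomorphism $0\to P_1\xrightarrow{f} P_0\to S\to 0$. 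Let $\Sigma=\{f_S\mid S\in\Scal\}$. Then each $f_S$ is a monomorphism in $proj(A)$, and applying $Hom_A(-,X)$ again shows $Hom_A(f_S,X)$ is surjective if and only if $Ext_A^1(S,X)=0$; thus $\Dcal_\Sigma=\Scal^{\perp_1}=\Tcal$. This simultaneously proves $(2)\Rightarrow(1)$ (a tilting class is of the form $\Dcal_\Sigma$ with $\Sigma$ consisting of monomorphisms) and the last sentence of the lemma.

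The one subtlety is the passage from a finite projective presentation of $S$ to a \emph{short exact} sequence of finitely generated projectives. This uses that $pd_A(S)\le 1$: if $0\to K\to P_0\to S\to 0$ with $P_0$ finitely generated projective, then $K$ is projective; and since $S$ is finitely presented, $K$ is finitely generated, hence lies in $proj(A)$. I expect this to be the only point requiring care—everything else is a routine application of the long exact $Hom/Ext$ sequence induced by $0\to P_i\to Q_i\to C_i\to 0$ together with projectivity of $P_i,Q_i$, which converts surjectivity of $Hom_A(\sigma,X)$ into the vanishing $Ext_A^1(C_i,X)=0$ and back. No further invocation of the structure of universal localisations is needed; the whole statement is really about the elementary interplay between maps of projectives and Ext-orthogonality, packaged through Theorem~\ref{tilting finite type}.
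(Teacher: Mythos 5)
There is a genuine gap: you never prove the implication $(2)\Rightarrow(1)$. That implication is a statement about the \emph{given} set $\Sigma$: if $\Dcal_\Sigma$ happens to be a tilting class, then every map already in $\Sigma$ must be a monomorphism. What you prove instead (``every tilting class equals $\Dcal_{\Sigma'}$ for a suitable set $\Sigma'$ of monomorphisms'') is exactly the ``moreover'' sentence of the lemma, and it is logically independent of $(2)\Rightarrow(1)$: exhibiting \emph{some} set of monomorphisms realising the class says nothing about the maps in the original $\Sigma$. Your parenthetical gloss of $(2)\Rightarrow(1)$ as ``a tilting class is of the form $\Dcal_\Sigma$ with $\Sigma$ consisting of monomorphisms'' is a misreading of the statement, so the claimed ``simultaneous'' proof does not cover this direction at all.

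The missing argument (as in the paper) goes as follows. If $X\in\Dcal_\Sigma$ then, by projectivity of the domains, for every $\sigma\colon P\to Q$ in $\Sigma$ each map $P\to X$ factors through $\sigma$. Now take an injective cogenerator $C$ of $Mod(A)$ and a monomorphism $\phi\colon P\to C^I$. Since $C^I$ is injective and tilting classes contain all injective modules, the hypothesis that $\Dcal_\Sigma$ is tilting gives $C^I\in\Dcal_\Sigma$, so $\phi$ factors as $\phi=\psi\circ\sigma$ for some $\psi\colon Q\to C^I$; as $\phi$ is injective, so is $\sigma$. Your treatment of $(1)\Rightarrow(2)$ and of the ``moreover'' statement (passing to cokernels, respectively to monomorphic finite projective presentations of the modules in $\Scal$, and invoking Theorem~\ref{tilting finite type}) is correct and is essentially the paper's argument; only the $(2)\Rightarrow(1)$ direction needs to be supplied.
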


\begin{proof}
Suppose that (1) holds and consider the set $\Scal=\{Coker(\sigma)\mid\sigma\in\Sigma\}$ of finitely presented modules of projective dimension at most one. It follows that $\Dcal_\Sigma=\Scal^{\perp_1}$, which is a tilting class by Theorem~\ref{tilting finite type} and the discussion below it. Conversely, by using again Theorem~\ref{tilting finite type}, every tilting class in $Mod(A)$ is of the form $\Scal^{\perp_1}$ for a set $\Scal$ of finitely presented modules of projective dimension at most one. Therefore, it can be written as $\Dcal_\Sigma$ for a set $\Sigma$ in $proj(A)$ of monomorphic presentations of the modules in $\Scal$.

Suppose that (2) holds. We need to show that all the maps in $\Sigma$ are monomorphic.
First, note that an $A$-module $X$ is in $\Dcal_\Sigma$ if and only if for all $\sigma\colon P\ra Q$ in $\Sigma$ every given map $P\ra X$ factors through $\sigma$. Now take $\sigma\colon P\ra Q$ in $\Sigma$ and let $C$ be an injective cogenerator of $Mod(A)$ together with a monomorphism $\phi\colon P\ra C^{I}$ for some set $I$.
Since $C^I$ is an injective $A$-module and, by assumption, $\Dcal_\Sigma$ is a tilting class, we have $C^I\in\Dcal_\Sigma$. Consequently, the map $\phi$ must factor through $\sigma$, forcing also the map $\sigma$ to be a monomorphism.
\end{proof}

If the equivalent conditions in Lemma \ref{mono} are fulfilled, the universal localisation of $A$ at $\Sigma$ turns out to be easier to describe (see, for example, \cite{Sch1}). The following result is a consequence of Theorem \ref{Main 1}.

\begin{proposition}\label{mono tilting epi}
Let $\Sigma$ be a set of monomorphic maps in $proj(A)$. Then the universal localisation $A\ra A_\Sigma$ is a tilting ring epimorphism.
\end{proposition}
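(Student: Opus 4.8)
The plan is to combine Lemma~\ref{mono} with Theorem~\ref{Main 1}(1) and then invoke the description of tilting ring epimorphisms. By Lemma~\ref{mono}, the class $\Dcal_\Sigma$ of $\Sigma$-divisible modules is a tilting class, and it is realised concretely as $\Scal^{\perp_1}$ where $\Scal = \{\Coker(\sigma)\mid\sigma\in\Sigma\}$ is a set of finitely presented modules of projective dimension at most one. First I would apply Theorem~\ref{Main 1}(1) to this set $\Scal$ to obtain a partial tilting module $T_1$ with $\Scal^{\perp_1}=T_1^{\perp_1}$ and, crucially, $\Scal^\perp = T_1^\perp$. By the definition of a tilting ring epimorphism (the subsection following Example~\ref{expl proj}), $T_1^\perp$ is a bireflective subcategory closed under extensions, arising from a ring epimorphism $A\ra B$ with $\Xcal_B = T_1^\perp$, and this $A\ra B$ is by definition a tilting ring epimorphism.

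The substance of the argument is then to identify $B$ with the universal localisation $A_\Sigma$, equivalently to show $\Xcal_B = \Xcal_\Sigma$; since both $\Xcal_B$ and $\Xcal_\Sigma$ are bireflective subcategories and the correspondence of Subsection~\ref{subsection ring epi} is a bijection, the equality of the subcategories gives the equivalence of the two ring epimorphisms, hence $A\ra A_\Sigma$ is (equivalent to) the tilting ring epimorphism $A\ra B$. So the key step is the chain of equalities
\[
\Xcal_B = T_1^\perp = \Scal^\perp = \Dcal_\Sigma\cap\Fcal_\Sigma = \Xcal_\Sigma .
\]
Here the middle equality $T_1^\perp = \Scal^\perp$ is Theorem~\ref{Main 1}(1), and the last equality is recorded right after the definitions of $\Dcal_\Sigma$ and $\Fcal_\Sigma$. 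The remaining point is the identification $\Scal^\perp = \Dcal_\Sigma\cap\Fcal_\Sigma$: for $\sigma\colon P\ra Q$ a monomorphism in $proj(A)$ with cokernel $C_\sigma$, the short exact sequence $0\ra P\ra Q\ra C_\sigma\ra 0$ together with projectivity of $P$ and $Q$ yields, upon applying $\Hom_A(-,X)$, that $\Hom_A(C_\sigma,X)=0$ is equivalent to $\Hom_A(\sigma,X)$ being injective (i.e. $X\in\Fcal_\Sigma$) and $\Ext^1_A(C_\sigma,X)=0$ is equivalent to $\Hom_A(\sigma,X)$ being surjective (i.e. $X\in\Dcal_\Sigma$), since $\Ext^1_A(Q,X)=0$. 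Thus $X\in\Scal^\perp$ iff $X\in\Dcal_\Sigma\cap\Fcal_\Sigma$, as desired.

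I expect the main obstacle to be purely bookkeeping rather than conceptual: one must be careful that the set $\Scal$ used in Lemma~\ref{mono} (namely the cokernels of the chosen monomorphic maps) is precisely the set fed into Theorem~\ref{Main 1}(1), so that the partial tilting module $T_1$ produced really does satisfy $T_1^\perp=\Scal^\perp$ for that same $\Scal$; and one must note that $\Xcal_\Sigma$ as originally defined via ``$\Hom_A(\sigma,X)$ is an isomorphism'' coincides with $\Dcal_\Sigma\cap\Fcal_\Sigma$, which is exactly the sentence ``$\Xcal_\Sigma=\Dcal_\Sigma\cap\Fcal_\Sigma$'' stated in the text. Once these identifications are in place, the conclusion that $A\ra A_\Sigma$ is a tilting ring epimorphism is immediate from the uniqueness in the bijection between ring epimorphisms and bireflective subcategories.
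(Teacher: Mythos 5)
Your proposal is correct and follows essentially the same route as the paper: identify $\Scal=\{\Coker(\sigma)\mid\sigma\in\Sigma\}$, note $\Xcal_\Sigma=\Dcal_\Sigma\cap\Fcal_\Sigma=\Scal^\perp$, and apply Theorem~\ref{Main 1}(1) to realise this as $T_1^\perp$ for a partial tilting module $T_1$. Your extra verifications (the Hom--Ext computation from $0\ra P\ra Q\ra \Coker(\sigma)\ra 0$ and the appeal to the bijection with bireflective subcategories) are exactly the steps the paper leaves implicit.
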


\begin{proof}
As observed in the proof of Lemma \ref{mono} above, the class $\Dcal_\Sigma$ equals $\Scal^{\perp_1}$ for $\Scal=\{Coker(\sigma)\mid\sigma\in\Sigma\}$ and, moreover, we have $\Xcal_\Sigma=\Dcal_\Sigma\cap\Fcal_\Sigma=\Scal^\perp$. Now the claim follows from Theorem \ref{Main 1}(1).
\end{proof}

In general, however, not every universal localisation of $A$ is given by localising at a set of monomorphic maps and not every universal localisation is a tilting ring epimorphism. In order to better understand the general situation, we will pass in the forthcoming sections to the notion of silting module. 

Alternatively, we can use the following trick of torsion-reduction.
Note that for any set $\Sigma$ in $proj(A)$ the universal localisation of $A$ at $\Sigma$ coincides with the universal localisation of $A$ at $\Sigma^*:=\{Hom_A(\sigma,A) \mid \sigma\in\Sigma\}$ in $proj(A^{op})$ (Theorem \ref{uni loc} can also be stated for right $A$-modules!). Let $A_{TF}$ be the ring we obtain by factoring out the $\Tcal_\Sigma$-torsion part of $A$. The associated surjective ring homomorphism is denoted by $\pi:A\ra A_{TF}$. By construction, there is a unique $A$-module map $g:A_{TF}\ra A_\Sigma$ yielding a commutative diagram of ring epimorphisms
$$\xymatrix{A\ar[rr]^f\ar[dr]_\pi &  & A_\Sigma\\ & A_{TF}.\ar[ur]_g & }$$

\begin{proposition} \label{silting to tilting TF}
The map $g:A_{TF}\ra A_\Sigma$ is the universal localisation of $A_{TF}$ at $\Sigma_{TF}:=\{A_{TF}\otimes_A \sigma \mid \sigma\in\Sigma\}$. Moreover, $g$ is given by localising at the set of injective morphisms in $\Sigma^*_{TF}$.
\end{proposition}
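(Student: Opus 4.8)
The plan is to verify the universal property of Theorem~\ref{uni loc} directly for the map $g\colon A_{TF}\ra A_\Sigma$ with respect to the set $\Sigma_{TF}$, and then to deduce the ``moreover'' part by identifying $\Sigma_{TF}$ (after the torsion-reduction on the right, i.e.\ passing to $\Sigma^*_{TF}$) with a set of monomorphisms. For the universal property, first I would note that since $f = g\circ\pi$ and $A_\Sigma\otimes_A\sigma$ is an isomorphism for each $\sigma\in\Sigma$, applying $A_\Sigma\otimes_{A_{TF}}-$ to $A_{TF}\otimes_A\sigma$ gives $A_\Sigma\otimes_A\sigma$, so $A_\Sigma\otimes_{A_{TF}}(A_{TF}\otimes_A\sigma)$ is an isomorphism for each $\sigma$; hence $g$ satisfies condition~(1) of Theorem~\ref{uni loc} for $\Sigma_{TF}$. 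For condition~(2), let $h\colon A_{TF}\ra B$ be a ring homomorphism inverting all maps in $\Sigma_{TF}$. Then $h\circ\pi\colon A\ra B$ inverts all maps in $\Sigma$ (again by base change), so by the universal property of $f\colon A\ra A_\Sigma$ there is a unique ring homomorphism $\bar h\colon A_\Sigma\ra B$ with $\bar h\circ f = h\circ\pi$. It remains to check $\bar h\circ g = h$; this follows because both compose with $\pi$ to give $h\circ\pi$, and $\pi$ is a ring epimorphism, so maps out of $A_{TF}$ are determined by their restriction along $\pi$. Uniqueness of $\bar h$ as a factorisation of $h$ through $g$ is likewise forced by surjectivity (hence epimorphy) of $\pi$ together with the uniqueness in the universal property of $f$.

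For the ``moreover'' part, recall that the universal localisation at $\Sigma_{TF}$ equals the universal localisation at $\Sigma^*_{TF} = \{\Hom_{A_{TF}}(\tau,A_{TF})\mid\tau\in\Sigma_{TF}\}$ in $proj(A_{TF}^{op})$, and by construction $\Sigma^*_{TF}$ consists (up to the identification $A_{TF}\otimes_A\Hom_A(\sigma,A)\cong\Hom_{A_{TF}}(A_{TF}\otimes_A\sigma,A_{TF})$, valid for $\sigma$ a map between finitely generated projectives) of the maps $A_{TF}\otimes_A\sigma^*$ for $\sigma^*\in\Sigma^*$. The point of the torsion-reduction is that $A_{TF}$ was defined as the quotient of $A$ by its $\Tcal_\Sigma$-torsion submodule, where $\Tcal_\Sigma$ is the torsion class generated by the cokernels of the maps in $\Sigma$ — equivalently, $A_{TF}$ as a right $A$-module (after passing to $A^{op}$) is $\Sigma^*$-torsion-free, i.e.\ lies in $\Fcal_{\Sigma^*}$. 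Thus each map $\sigma^*\colon P\ra Q$ in $\Sigma^*$ becomes monomorphic after tensoring with $A_{TF}$: a map between finitely generated projective right $A_{TF}$-modules is monic iff its kernel vanishes, and here the kernel of $A_{TF}\otimes_A\sigma^*$ injects into a $\Sigma^*$-torsion part that has been killed. I would make this precise by observing $\Img(\sigma^*)$ together with $\Coker(\sigma^*)$, and noting that $\Tor$-vanishing or a direct diagram chase on $0\to\Ker\to P\to Q$ identifies the kernel of the base change with an $\Sigma^*$-torsion module inside the $\Sigma^*$-torsion-free module $A_{TF}\otimes_A P$.

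The main obstacle I anticipate is the ``moreover'' claim, specifically pinning down that $A_{TF}\otimes_A\sigma^*$ is genuinely monomorphic — this requires carefully relating the left-module torsion class $\Tcal_\Sigma$ (whose torsion submodule of $A$ we factored out) with the right-module torsion-freeness in $\Fcal_{\Sigma^*}$, and checking that tensoring a map of finitely generated projectives with the quotient ring $A_{TF}$ does not create new kernels beyond what torsion-freeness forbids. The first part (the universal property) should be essentially formal, using only that $\pi$ is an epimorphism of rings and that $f = g\pi$, together with base-change transitivity $A_\Sigma\otimes_{A_{TF}}(A_{TF}\otimes_A-)\cong A_\Sigma\otimes_A-$. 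Once monomorphy of $\Sigma^*_{TF}$ is established, one could alternatively invoke Proposition~\ref{mono tilting epi} over $A_{TF}^{op}$ to recognise $g$ as a tilting ring epimorphism, but for the statement as written it suffices to record that $g$ is the universal localisation at the (now monomorphic) set $\Sigma^*_{TF}$.
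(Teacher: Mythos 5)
The first half of your argument, establishing that $g$ has the universal property of Theorem~\ref{uni loc} with respect to $\Sigma_{TF}$, is correct and is essentially the paper's own proof: transitivity of base change gives $\Sigma_{TF}$-invertibility of $A_\Sigma$ and shows that any $\Sigma_{TF}$-inverting $\psi\colon A_{TF}\to B$ yields a $\Sigma$-inverting map $\psi\circ\pi$, and then the universal property of $f$ together with the surjectivity (epimorphy) of $\pi$ produces the unique factorisation through $g$.

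The ``moreover'' part, however, contains a genuine gap, and it sits exactly where you anticipated trouble. You claim that the construction of $A_{TF}$ as the quotient of $A$ by its $\Tcal_\Sigma$-torsion part is \emph{equivalent} to $A_{TF}$ lying in $\Fcal_{\Sigma^*}$ as a right $A$-module. These are different conditions: for a right module $Y$ and $\sigma\colon P\to Q$ in $proj(A)$ one has a natural isomorphism $\Hom_{A^{op}}(\sigma^*,Y)\cong Y\otimes_A\sigma$, so $A_{TF}\in\Fcal_{\Sigma^*}$ would say precisely that all the maps $A_{TF}\otimes_A\sigma$ in $\Sigma_{TF}$ are injective --- which is the stronger property that, as Remark~\ref{remark torsion reduction} explains, is in general \emph{not} achieved after one reduction step and requires a transfinite iteration killing torsion on both sides. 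Consequently the ambient module in your diagram chase is not known to be $\Sigma^*$-torsion-free, and there is also no reason why the kernel of the base-changed map should be $\Sigma^*$-torsion, so the proposed argument does not close. The needed statement only requires the \emph{left} torsion-freeness you do have: since $P$ and $Q$ are finitely generated projective, adjunction gives $\Hom_{A_{TF}}(A_{TF}\otimes_A\sigma,A_{TF})\cong \Hom_A(\sigma,A_{TF})\cong \sigma^*\otimes_A A_{TF}$, and $\Hom_A(\sigma,A_{TF})$ is injective because $\Hom_A(\Coker(\sigma),A_{TF})=0$, i.e.\ because $A_{TF}$ is by construction $\Tcal_\Sigma$-torsion-free as a left $A$-module. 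This one-line argument is exactly the paper's proof of the ``moreover'' claim; your own identification $A_{TF}\otimes_A\Hom_A(\sigma,A)\cong\Hom_{A_{TF}}(A_{TF}\otimes_A\sigma,A_{TF})$ already contains the key isomorphism --- it just has to be combined with the left-module torsion-freeness of $A_{TF}$ rather than the unestablished right-module one.
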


\begin{proof}
First of all, $A_\Sigma$ is $\Sigma_{TF}$-invertible, since for all $\sigma\in\Sigma$
$$A_\Sigma\otimes_{A_{TF}}(A_{TF}\otimes_A\sigma)\cong A_\Sigma\otimes_A\sigma$$
is an isomorphism by assumption on $A_\Sigma$. We have to check the universal property for $g$. Let $\psi:A_{TF}\ra S$ be a $\Sigma_{TF}$-invertible ring homomorphism. Consequently,
$$S\otimes_A\sigma\cong S\otimes_{A_{TF}}(A_{TF}\otimes_A\sigma)$$
is an isomorphism yielding, by the universal property of $A_\Sigma$, a unique ring homomorphism $h:A_\Sigma\ra S$ such that $\psi\circ\pi=h\circ f$. Hence, we get that $\psi\circ\pi=h\circ g\circ\pi$. Using the surjectivity of $\pi$, we obtain the wanted factorisation $\psi=h\circ g$. It follows that $g$ is the universal localisation of $A_{TF}$ at $\Sigma_{TF}$. Moreover, by the construction of $A_{TF}$, we know that $Hom_A(\sigma,A_{TF})$ is injective and, thus, using adjunction, also
$Hom_{A_{TF}}(A_{TF}\otimes_A\sigma,A_{TF})$
must be injective. This finishes the proof.
\end{proof}

\begin{remark}\label{remark torsion reduction}
We can iterate the process described above. To do so, we first need to define - similar to the situation for left $A$-modules - the torsion pairs $(\Dcal^{op},\Rcal^{op})$ and $(\Tcal^{op},\Fcal^{op})$ in $Mod(A^{op})$ with respect to a given set $\Sigma^{op}$ in $proj(A^{op})$. Now we can reduce $A$ transfinitely by factoring out, step by step, the $\Tcal$-torsion and the $\Tcal^{op}$-torsion part. As a direct limit, we obtain a ring $A_{TF}$ that is torsion-free from both sides, meaning, with respect to $\Fcal$ and $\Fcal^{op}$. Note that the reduction process is finite if the ring $A$ is noetherian. Again, we get a commutative diagram of ring epimorphisms as above
where $g$ is the universal localisation of $A_{TF}$ at $\Sigma_{TF}$. But now all the maps in $\Sigma_{TF}^*$ \textbf{and} in $\Sigma_{TF}$ are injective or, equivalently, all cokernels of the maps in $\Sigma_{TF}$ are bound. The latter is a necessary (see \cite[Remark 4.4]{AS}) but not a sufficient condition for the localisation $g$ to be injective. An example of a universal localisation at a set of bound modules that is not injective can be found in \cite[Example 4.3]{AMV2} by localising with respect to the bound module $I_5\oplus S_3$.
\end{remark}

A major problem with using Proposition~\ref{silting to tilting TF} and Remark \ref{remark torsion reduction} in practice is that, in general, $Tor_1^A(A_{TF},A_{TF})$ does not vanish. In particular, $\pi$ may not be a universal localisation. Thus, there is very little one can say about the relation of the homological properties of $A$ and those of $A_{TF}$. 


\section{The morphism category}\label{section mor}
In this section, we provide the necessary setup to discuss universal localisations via the notion of silting module which gives an arguably more practical approach to treat universal localisations at non-injective maps than Proposition~\ref{silting to tilting TF} (see also the applications in \cite{MS} supporting this claim).

For a given ring $A$, we are interested in the morphism category $Mor(A)$ whose objects $Z_g$ are $A$-module maps $g\colon M\ra N$ and whose morphisms are given by commutative squares of the form

$$\xymatrix{M\ar[r]^g\ar[d] & N\ar[d]\\ M'\ar[r]^{g'}& N'.}$$

Note that the category $Mor(A)$ is equivalent to the category of left modules over the lower triangular matrix ring $T_2(A)$ (see \cite[Chapter I.4]{A} and \cite[Chapter III.2]{ARS}).
Moreover, every object in $Mor(A)$ can be viewed as a two-term complex of $A$-modules concentrated in homological degrees $1$ and $0$.
We are particularly interested in the following two full and extension-closed subcategories of $Mor(A)$

$$\Lcal:=Mor(proj(A))=\{Z_\sigma \mid \sigma\in proj(A)\}$$
$$\Bcal\Lcal:=Mor(Proj(A))=\{Z_\sigma \mid \sigma\in Proj(A)\}.$$

Both $\Lcal$ and $\Bcal\Lcal$ are naturally equipped with the structure of an exact category induced by $Mor(A)$ where conflations are defined as degreewise exact sequences.
Furthermore, it is not hard to check that $\Bcal\Lcal$ is closed for filtrations in $Mor(A)$. In fact, $\Bcal\Lcal$ appears on the left hand side of the cotorsion pair generated by $Z_{(A\ra 0)}$.
The following lemma will be crucial in our context.

\begin{lemma}\label{lem hered}
The categories $\Lcal$ and $\Bcal\Lcal$ are hereditary with enough projectives and injectives. Moreover, for all objects $Z_\sigma$ in $\Bcal\Lcal$ and $Z_g$ in $Mor(A)$, we have
$$Ext_{Mor(A)}^1(Z_\sigma,Z_g)\cong Hom_{\Kcal^b(A)}(Z_\sigma,Z_g[1]).$$
\end{lemma}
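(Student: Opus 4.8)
The plan is to treat $\Lcal$ and $\Bcal\Lcal$ as exact categories and first identify their projective and injective objects explicitly. For projectives, I would show that the objects $Z_{(P \xrightarrow{1} P)}$ (the identity maps on projectives) and $Z_{(0 \to P)}$ with $P$ in $proj(A)$, respectively $Proj(A)$, generate enough projectives: given any $Z_g\colon M \to N$ with $M, N$ projective, choosing projective covers fitting into a commutative square realises $Z_g$ as a quotient (in the degreewise-exact sense) of a direct sum of such objects. Dually, the objects $Z_{(P \to 0)}$ and $Z_{(P \xrightarrow{1} P)}$ are injective and there are enough of them. Since in $Mor(A) \simeq T_2(A)\text{-Mod}$ the ring $T_2(A)$ has global dimension controlled by that of $A$ together with the triangular structure, but here we have restricted to the subcategories of maps \emph{between projectives}, every object of $\Lcal$ (resp. $\Bcal\Lcal$) has a length-one projective resolution inside the subcategory: indeed $Z_\sigma$ for $\sigma\colon P \to Q$ sits in a conflation $0 \to Z_{(P \xrightarrow{1} P)} \to Z_{(P \xrightarrow{\binom{1}{\sigma}} P\oplus Q)} \to Z_{(0 \to Q)} \to 0$ or a similar one exhibiting projective dimension $\le 1$; this is the content of ``hereditary''. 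I would spell out one such conflation carefully, as it is the combinatorial heart of the argument, and note the dual statement for injective dimension.

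For the displayed $\Ext$-formula, the key point is that $Ext^1_{Mor(A)}(Z_\sigma, Z_g)$ can be computed from a projective resolution of $Z_\sigma$ in $Mor(A)$, and when $Z_\sigma$ lies in $\Bcal\Lcal$ such a resolution has length one and is built from the objects $Z_{(P \xrightarrow{1} P)}$ and $Z_{(0 \to P)}$ identified above. Viewing these as two-term complexes concentrated in degrees $1$ and $0$, the object $Z_{(P \xrightarrow{1} P)}$ is contractible (null-homotopic) while $Z_{(0 \to P)}$ is just $P$ in degree $0$ and $Z_{(P \to 0)}$ is $P$ in degree $1$. Thus a length-one projective resolution of $Z_\sigma$ in $Mor(A)$ \emph{is}, after forgetting to complexes, a representative of $Z_\sigma$ by a two-term complex of projectives, and the standard interpretation of $\Ext^1$ via resolutions matches the computation of $Hom_{\Kcal^b(A)}(Z_\sigma, Z_g[1])$ using that same complex of projectives: a homotopy class of chain maps $Z_\sigma \to Z_g[1]$ is exactly a map modulo those factoring through the contractible summands, which is exactly the cokernel of $Hom_{Mor(A)}(-, Z_g)$ applied to the resolution in degree computing $\Ext^1$. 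Concretely, I would write the projective resolution $0 \to Q_\sigma \to P_\sigma \to Z_\sigma \to 0$ in $Mor(A)$, apply $Hom_{Mor(A)}(-, Z_g)$, and recognise the resulting $\Ext^1$ as the degree-$1$ cohomology of $Hom^\bullet_A$ of the two-term projective complex underlying $Z_\sigma$ with $Z_g$, which by the definition of morphisms in $\Kcal^b(A)$ is $Hom_{\Kcal^b(A)}(Z_\sigma, Z_g[1])$.

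The main obstacle I anticipate is bookkeeping with the two exact structures and making sure the projective resolution of $Z_\sigma$ used to compute $Ext^1_{Mor(A)}$ really can be taken inside $\Bcal\Lcal$ (so that it consists of the simple ``identity'' and ``$0\to P$'' building blocks) rather than merely inside $Mor(A)$ — equivalently, that the conflation structure on $\Bcal\Lcal$ inherited from $Mor(A)$ is the one that makes $\Bcal\Lcal$ Frobenius-like with the stated projectives. This is where the earlier observation that $\Bcal\Lcal$ is closed under filtrations and is the left-hand side of the cotorsion pair generated by $Z_{(A \to 0)}$ gets used: it guarantees that the relevant short exact sequences in $Mor(A)$ have all three terms in $\Bcal\Lcal$, so there is no discrepancy between ``$\Ext^1$ computed in $\Bcal\Lcal$'' and ``$\Ext^1$ computed in $Mor(A)$'' for the pair $(Z_\sigma, Z_g)$. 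The remaining verifications — that identity-maps-on-projectives are both projective and injective, that the resolution has length one, and that the homotopy relation on $\Kcal^b(A)$ matches factoring through contractibles — are routine and I would dispatch them quickly, citing the triangular-matrix-ring description and the standard dictionary between two-term complexes and objects of $Mor(A)$.
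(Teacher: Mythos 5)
Your strategy is the same as the paper's: take $Z_{id_P}$ and $Z_{(0\to P)}$ as the projective building blocks (these are in fact projective in all of $Mor(A)$, which is what the displayed Ext-formula needs, since $Z_g$ is an arbitrary object of $Mor(A)$), take $Z_{id_P}$ and $Z_{(P\to 0)}$ as the relative injectives of $\Bcal\Lcal$, resolve $Z_\sigma$ in one step, and compare the cokernel computing $Ext^1_{Mor(A)}(Z_\sigma,Z_g)$ with null-homotopies. The paper carries out this last step by an explicit element count: both groups are quotients of $Hom_A(P,N)$ by the subgroup of maps of the form $gu-v\sigma$ with $u\colon P\to M$, $v\colon Q\to N$, and your ``factor through the contractible summands'' description amounts to the same computation.

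The one concrete formula you wrote down, however, is wrong, and it is exactly the piece you call the combinatorial heart. The conflation $0 \to Z_{id_P} \to Z_{(P\to P\oplus Q)} \to Z_{(0\to Q)} \to 0$ (with middle structure map $\binom{1}{\sigma}$) does not involve $Z_\sigma$ at all; moreover $\binom{1}{\sigma}$ is a split monomorphism, so the middle term is just $Z_{id_P}\oplus Z_{(0\to Q)}$ and the sequence is split exact among projectives, hence carries no information about $Z_\sigma$. The resolution you actually need is $0\to Z_{(0\to P)}\to Z_{id_P}\oplus Z_{(0\to Q)}\to Z_\sigma\to 0$, where the epimorphism has components $(id_P,\sigma)$ on the summand $Z_{id_P}$ and $(0,id_Q)$ on the summand $Z_{(0\to Q)}$ (domain map, codomain map), and the kernel $Z_{(0\to P)}$ embeds via $\binom{1}{-\sigma}\colon P\to P\oplus Q$ in degree zero; this is precisely the resolution the paper writes out. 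Dually, $0\to Z_\sigma\to Z_{(P\to 0)}\oplus Z_{id_Q}\to Z_{(Q\to 0)}\to 0$ gives the one-step coresolution by objects injective relative to $\Bcal\Lcal$ (not injective in $Mor(A)$). With this correction the rest of your outline goes through and coincides with the paper's proof. Also note that your worry about a discrepancy between $Ext^1$ computed in $\Bcal\Lcal$ and in $Mor(A)$ is not an issue for the displayed isomorphism: the resolution consists of objects projective in $Mor(A)$ itself, so the computation takes place in $Mor(A)$; extension-closedness of $\Bcal\Lcal$ is only needed to see that the same sequences witness heredity and enough projectives and injectives of the subcategories.
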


\begin{proof}
Note that for $P$ in $Proj(A)$ the objects $Z_{id_P}$ and $Z_{(0\ra P)}$ are projective in $Mor(A)$ and, thus, also in $\Bcal\Lcal$. Dually, the objects $Z_{id_P}$ and $Z_{(P\ra 0)}$ are injective in $\Bcal\Lcal$ (but usually not in $Mor(A)$).

Now take some $Z_\sigma$ in $\Bcal\Lcal$ given by the map $\sigma\colon P\ra Q$. A projective resolution 
$$\xymatrix{0\ar[r] & P_1(Z_\sigma)\ar[r] & P_0(Z_\sigma)\ar[r] & Z_\sigma\ar[r] & 0}$$
of $Z_\sigma$ in $\Bcal\Lcal$ (equivalently, in $Mor(A)$) is given by the following commutative diagram of $A$-modules
$$\xymatrix{0\ar[r] & 0\ar[r]\ar[d] & P\ar[d]^\oplus\ar[r]^{id} & P\ar[d]^\sigma\ar[r] & 0\\ 0\ar[r] & P\ar[r]^{\!\!\!\!\Tiny{\left(\begin{array}{c}-\sigma\\id\end{array}\right)}} & Q\oplus P\ar[r]^{\,\,\,\,(id\,\,\, \sigma)} & Q\ar[r] & 0}$$
Given $g\colon M\ra N$, an element of $Ext_{Mor(A)}^1(Z_\sigma,Z_g)$ is represented by an element of $Hom_{Mor(A)}(P_1(Z_\sigma),Z_g)$, so by a commutative square of $A$-modules:
$$\xymatrix{0\ar[r]\ar[d]& M\ar[d]^g \\ P\ar[r]^h & N}$$
Similarly, an element of $Hom_{\Kcal^b(A)}(Z_\sigma,Z_g[1])$ is represented by a chain complex morphism:
$$\xymatrix{& P\ar[d]^h\ar[r]^\sigma & Q\\ M\ar[r]^{-g} & N &}$$
Both cases amount to specifying a morphism $h\colon P \to N$ in $Mod(A)$.

Now the corresponding element of $Ext_{Mor(A)}^1(Z_\sigma,Z_g)$ vanishes if and only if the map $P_1(Z_\sigma) \to Z_g$ factors through the inclusion $P_1(Z_\sigma) \to P_0(Z_\sigma)$ if and only if there are maps $u\colon P \to M$ and $v\colon Q \to N$ in $Mod(A)$ such that the lower row of
$$\xymatrix{0\ar[r]\ar[d]& P\ar[r]^u\ar[d]^\oplus& M\ar[d]^g \\ P\ar[r]^-{\Tiny{\left(\begin{array}{c}-\sigma\\id\end{array}\right)}}& Q \oplus P\ar[r]^-{(v\,\,\, gu)} & N}$$
composes to $h$. This is further equivalent to $h=gu-v\sigma$, or, in other words, the chain complex morphism above being null-homotopic.
\end{proof}

\begin{remark}\label{mapping cone}
The isomorphism $Hom_{\Kcal^b(A)}(Z_\sigma,Z_g[1]) \ra Ext_{Mor(A)}^1(Z_\sigma,Z_g)$ can also be made rather explicit via the mapping cone construction. Given a chain complex morphism $\tilde h\colon Z_\sigma \to Z_g[1]$, the mapping cone of $\tilde h$ is by the very construction a part of an exact sequence of chain complexes
$$\xymatrix{0 \ar[r] & Z_g \ar[r] & cone(\tilde h) \ar[r] & Z_\sigma \ar[r] & 0}$$
which can be interpreted as a short exact sequence in $Mor(A)$. This yields an element of $Ext_{Mor(A)}^1(Z_\sigma,Z_g)$. As this fact is only supplementary with respect to the discussion below, we leave the details to the reader.
\end{remark}

\begin{remark}\label{rem proj dim}
The previous lemma shows that all objects in $\Bcal\Lcal$ are of projective dimension at most one when regarded as objects in $Mor(A)$. However, the subcategory $\Bcal\Lcal$ is, in general, not determined by this property. For example, if $X \in Mod(A)$ is non-projective of projective dimension one, then $Z_{id_X}$ has projective dimension one in $Mor(A)$, but $Z_{id_X} \not\in \Bcal\Lcal$. 
\end{remark}

We will be interested in translating information from $Mor(A)$ to $Mod(A)$ and backwards. For a set $\Sigma$ of objects in $\Bcal\Lcal$ we consider the full subcategory $\Dcal_\Sigma$ of $Mod(A)$ given by all modules $X$ for which the map $Hom_A(\sigma,X)$ is surjective for all $Z_\sigma\in\Sigma$. If the set $\Sigma$ only contains a single object $Z_\sigma$, we sometimes write $\Dcal_\sigma$ instead of $\Dcal_\Sigma$. According to the definition in the previous section (where the set $\Sigma$ was chosen from $\Lcal$), the $A$-modules in $\Dcal_\Sigma$ are called \textbf{$\Sigma$-divisible}. Note that $\Dcal_\Sigma$ is always closed for extensions and quotients in $Mod(A)$. In other words, $\Dcal_\Sigma$ is a torsion class whenever it is closed for coproducts (for example, in case the set $\Sigma$ is chosen from $\Lcal$). The following lemma is inspired by \cite[Lemma 3.4]{AIR}.

\begin{lemma}\label{translation}
Let $\Sigma$ be a set of objects in $\Bcal\Lcal$. Then the full subcategory $\Sigma^{\perp_1}$ of $Mor(A)$ is given by $\{Z_g\in Mor(A)\mid Coker(g)\in\Dcal_\Sigma\}$.
\end{lemma}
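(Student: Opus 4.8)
The plan is to use the explicit $\operatorname{Ext}$-computation from Lemma~\ref{lem hered} to identify $\operatorname{Ext}^1_{Mor(A)}(Z_\sigma, Z_g)$ with a concrete quotient of $\operatorname{Hom}_A(P, N)$ (where $\sigma\colon P\to Q$ and $g\colon M\to N$), and then show that this quotient vanishes for all $Z_\sigma\in\Sigma$ precisely when every map $P\to N$ lifts to a map $P\to N$ making the relevant square commute modulo $\sigma$ — which is exactly the condition that $\operatorname{Hom}_A(\sigma, \operatorname{Coker}(g))$ is surjective. First I would fix $Z_g\in Mor(A)$ and $Z_\sigma\in\Sigma$ with $\sigma\colon P\to Q$ in $Proj(A)$, and recall from the proof of Lemma~\ref{lem hered} that an element of $\operatorname{Ext}^1_{Mor(A)}(Z_\sigma,Z_g)$ is given by a map $h\colon P\to N$, and that it vanishes if and only if $h = gu - v\sigma$ for some $u\colon P\to M$ and $v\colon Q\to N$.

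Next I would analyse this vanishing condition. Let $p\colon N\to \operatorname{Coker}(g)$ be the canonical projection. The point is that $h = gu - v\sigma$ for some $u, v$ is equivalent to: $p\circ h$ factors through $\sigma$, i.e. $p\circ h = w\circ\sigma$ for some $w\colon Q\to \operatorname{Coker}(g)$. Indeed, if $h = gu - v\sigma$ then $p\circ h = -p\circ v\circ\sigma$ since $p\circ g = 0$, so $w = -p\circ v$ works. Conversely, if $p\circ h = w\circ\sigma$, then since $Q$ is projective we may lift $w$ to $\tilde v\colon Q\to N$ with $p\circ\tilde v = w$; then $p\circ(h + \tilde v\sigma) = 0$, so $h + \tilde v\sigma$ factors through $\ker p = \operatorname{Im} g$, and since $P$ is projective it factors as $g\circ u$ for some $u\colon P\to M$, giving $h = gu - \tilde v\sigma$. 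Thus $\operatorname{Ext}^1_{Mor(A)}(Z_\sigma,Z_g) = 0$ if and only if every $h\colon P\to N$ has $p\circ h$ in the image of $\operatorname{Hom}_A(\sigma,\operatorname{Coker}(g))\colon \operatorname{Hom}_A(Q,\operatorname{Coker}(g))\to\operatorname{Hom}_A(P,\operatorname{Coker}(g))$.

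Finally I would close the loop: since $P$ is projective and $p$ is surjective, the induced map $\operatorname{Hom}_A(P, N)\to\operatorname{Hom}_A(P,\operatorname{Coker}(g))$ is surjective, so the condition "$p\circ h\in\operatorname{Im}\operatorname{Hom}_A(\sigma,\operatorname{Coker}(g))$ for all $h\colon P\to N$" is equivalent to "$\operatorname{Hom}_A(\sigma,\operatorname{Coker}(g))$ is surjective", i.e. to $\operatorname{Coker}(g)\in\Dcal_\sigma$. Ranging over all $Z_\sigma\in\Sigma$ gives $Z_g\in\Sigma^{\perp_1}$ iff $\operatorname{Coker}(g)\in\bigcap_{Z_\sigma\in\Sigma}\Dcal_\sigma = \Dcal_\Sigma$, which is the claim. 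I do not expect a serious obstacle here; the only point requiring a little care is the use of projectivity of $P$ and $Q$ at the two lifting steps, and keeping straight that the $\operatorname{Ext}$-identification of Lemma~\ref{lem hered} requires $Z_\sigma\in\Bcal\Lcal$ — which holds since $\Sigma\subseteq\Bcal\Lcal$ by hypothesis.
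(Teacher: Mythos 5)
Your argument is correct and is essentially the paper's approach spelled out in full: the paper reduces via Lemma~\ref{lem hered} to a vanishing of $Hom_{\Kcal^b(A)}(Z_\sigma,Z_g[1])$ and then cites~\cite[Lemma~3.4]{AIR}, whose purely homological computation is precisely the diagram chase you carry out explicitly. There is a small sign slip in the converse direction — after lifting $w$ along $p$ you need $p\circ\tilde v=-w$ (not $w$) for the stated identity $p\circ(h+\tilde v\sigma)=0$ to hold — but since ``factoring through $\sigma$'' is invariant under a sign change this is harmless and the argument goes through once corrected.
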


\begin{proof}
By Lemma \ref{lem hered}, an object $Z_g$ in $Mor(A)$ belongs to $\Sigma^{\perp_1}$ if and only if $Hom_{\Kcal^b(A)}(Z_\sigma,Z_g[1])=0$ for all $Z_\sigma\in\Sigma$. Now the claim follows from the proof of \cite[Lemma 3.4]{AIR}. To see this, note that the arguments used there are of pure homological nature and, thus, they can be applied as long as $\Sigma$ only contains morphisms between (not necessarily finitely generated) projective $A$-modules.
\end{proof}


\section{Silting modules and universal localisations}\label{section silting}

\subsection{Silting modules}
Silting modules were introduced as the module-theoretic counterpart of two-term silting complexes (see \cite{AMV}).

\begin{definition}
An $A$-module $T$ is called 
\begin{itemize}
\item \textbf{partial silting}, if there is a projective presentation $\omega$ of $T$ such that $\Dcal_\omega$ is a torsion class and $T\in\Dcal_\omega$.
\item \textbf{silting}, if there is a projective presentation $\omega$ of $T$ such that $\Dcal_\omega=Gen(T)$.
\end{itemize}
We say that $T$ is (partial) silting \textbf{with respect to $\omega$}.
\end{definition}

It is not hard to check that (partial) tilting modules are always (partial) silting. Moreover, in the context of finite dimensional algebras over an algebraically closed field, the finitely presented silting modules correspond precisely to the support $\tau$-tilting modules introduced in \cite{AIR}. 
Now we can also show that (partial) silting modules can be viewed as (partial) tilting objects in the morphism category.

\begin{lemma} \label{lem silting to tilting}
Let $T$ be an $A$-module with a projective presentation $\omega$. Then the following holds.
\begin{enumerate}
\item $T$ is partial silting with respect to $\omega$ if and only if $Z_\omega$ is partial tilting in $Mor(A)$.
\item $T$ is silting with respect to $\omega$ if and only if $Z_\omega \oplus Z_{id_A}$ is tilting in $Mor(A)$.
\end{enumerate}
\end{lemma}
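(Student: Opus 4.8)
The plan is to use the dictionary set up in Section~\ref{section mor} to translate the module-theoretic conditions defining (partial) silting modules into the analogous conditions for (partial) tilting objects in $Mor(A)$, invoking Lemma~\ref{translation} to handle the orthogonality classes and Lemma~\ref{lem hered} to control projective dimensions. Throughout, write $\omega\colon P_1\ra P_0$ for the given projective presentation, so that $Z_\omega\in\Bcal\Lcal$, and recall that $Z_\omega$ has projective dimension at most one in $Mor(A)$ by Remark~\ref{rem proj dim}.

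First I would prove $(1)$. By Lemma~\ref{translation} applied to $\Sigma=\{Z_\omega\}$, we have $Z_\omega^{\perp_1}=\{Z_g\in Mor(A)\mid Coker(g)\in\Dcal_\omega\}$. The key observation is that the functor $Z_g\mapsto Coker(g)$ from $Mor(A)$ to $Mod(A)$ is exact on conflations (degreewise short exact sequences) by the snake lemma combined with the fact that cokernels are right exact — or more precisely, $Coker$ sends a conflation in $Mor(A)$ to a short exact sequence in $Mod(A)$ because the kernel terms assemble into an exact sequence as well; it also commutes with coproducts and sends $Add(Z_\omega)$ into $Add(T)$ with $Coker(\omega)=T$. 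Hence $Z_\omega^{\perp_1}$ is a torsion class in $Mor(A)$ if and only if $\Dcal_\omega$ is a torsion class in $Mod(A)$: closure under quotients, extensions and coproducts transfers back and forth via $Coker$ and via the preimage description. (For the ``torsion class'' direction in $Mor(A)$ one also needs that $Z_\omega^{\perp_1}$ is closed under quotients in $Mor(A)$; but $Coker$ is right exact, so a quotient of $Z_g$ has cokernel a quotient of $Coker(g)$, which lies in $\Dcal_\omega$ whenever $Coker(g)$ does.) Finally $Z_\omega\in Z_\omega^{\perp_1}$ translates, via $Coker(\omega)=T$, into $T\in\Dcal_\omega$. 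Combining these equivalences with the definition of partial silting and of partial tilting (recalling that $Z_\omega$ automatically has $pd\le 1$) yields $(1)$.

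For $(2)$, I would argue that $Z_\omega$ silting-witnesses $Gen(T)=\Dcal_\omega$ exactly when $Z_\omega\oplus Z_{id_A}$ is tilting. By part~$(1)$ and its proof, $Z_\omega\oplus Z_{id_A}$ has $pd\le 1$ and $(Z_\omega\oplus Z_{id_A})^{\perp_1}=Z_\omega^{\perp_1}$, since $Z_{id_A}$ is projective in $\Bcal\Lcal$ (Lemma~\ref{lem hered}) and hence $Z_{id_A}^{\perp_1}=Mor(A)$; note $Coker(id_A)=0\in\Dcal_\omega$ always. So the tilting condition $(Z_\omega\oplus Z_{id_A})^{\perp_1}=Gen(Z_\omega\oplus Z_{id_A})$ becomes $Z_\omega^{\perp_1}=Gen(Z_\omega\oplus Z_{id_A})$. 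Applying $Coker$, the right-hand side maps onto $Gen(T)$ (since $Coker$ is right exact, commutes with coproducts, kills $Z_{id_A}$, and sends $Z_\omega$ to $T$), so $Gen(Z_\omega\oplus Z_{id_A})$ has cokernel class exactly $Gen(T)$; comparing with the preimage description of $Z_\omega^{\perp_1}$ gives the equivalence with $\Dcal_\omega=Gen(T)$. Here the role of the summand $Z_{id_A}$ is precisely to supply, in $Mor(A)$, the ``$0\ra A\ra T_0\ra T_1\ra 0$'' part of the tilting axioms, or rather to enlarge $Gen(Z_\omega)$ enough so that it becomes the full torsion class $Z_\omega^{\perp_1}$ — the analogue of the passage from $Gen(T)$ to the full tilting class in $Mod(A)$.

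The main obstacle I anticipate is the careful bookkeeping in the last equivalence: one must check that $Gen(Z_\omega\oplus Z_{id_A})$ in $Mor(A)$ is genuinely the preimage under $Coker$ of $Gen(T)$, not merely a subclass mapping onto it. Concretely, given $Z_g$ with $Coker(g)\in Gen(T)$, one needs to realise $Z_g$ as a quotient (in $Mor(A)$) of a coproduct of copies of $Z_\omega$ and $Z_{id_A}$; the copies of $Z_{id_A}$ are used to cover the domain $M$ of $g$ (any module is a quotient of a free one, and $Z_{id_A}^{(I)}=Z_{id_{A^{(I)}}}$ surjects onto $Z_{id_M}$, then onto $Z_g$'s ``identity part''), while copies of $Z_\omega$ handle the cokernel. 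This is the point where the explicit form of morphisms in $Mor(A)$ as commutative squares, and the right-exactness of $Coker$, must be combined with some care; once it is in place, both directions of $(2)$ follow formally, and $(1)$ is comparatively routine.
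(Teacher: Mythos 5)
Your proposal is correct and follows essentially the same route as the paper: use Lemma~\ref{translation} to describe $Z_\omega^{\perp_1}$ as the $Coker$-preimage of $\Dcal_\omega$, observe that $Z_{id_A}$ is projective so it does not affect $\perp_1$, and for the hard direction of~(2) construct the surjection $Z_\omega^{(I)}\oplus Z_{id_A}^{(J)}\twoheadrightarrow Z_g$ by covering $Coker(g)$ with copies of $Z_\omega$ and the domain of $g$ with copies of $Z_{id_A}$. One slip worth flagging: $Coker\colon Mor(A)\ra Mod(A)$ is \emph{not} exact on conflations --- the snake lemma produces a six-term sequence with a connecting map $\Ker(g_3)\ra Coker(g_1)$, so the cokernels need not form a short exact sequence. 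Fortunately you never actually need this: closure of $\Dcal_\omega$ under quotients and extensions is automatic, and so is closure of $Z_\omega^{\perp_1}$ (since $pd_{Mor(A)}(Z_\omega)\le 1$), so the only property that needs to transfer in part~(1) is closure under coproducts, which is exactly the shortcut the paper takes and which follows from $Coker$ preserving coproducts together with the preimage description.
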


\begin{proof}
(1) By Lemma~\ref{translation}, $T\in \Dcal_\omega$ if and only if $Z_\omega\in Z_\omega^{\perp_1}$ and, moreover, $\Dcal_\omega$ is closed under coproducts in $Mod(A)$ if and only if $Z_\omega^{\perp_1}$ is closed under coproducts in $Mor(A)$. Hence, $\Dcal_\omega$ is a torsion class if and only if so is $Z_\omega^{\perp_1}$. This proves (1).

(2) Suppose first that $Z := Z_\omega \oplus Z_{id_A}$ is tilting, that is $Z^{\perp_1} = Gen(Z)$. Given any morphism $g\colon X \to Y$ in $Mod(A)$, we have canonical isomorphisms
$$ Hom_{Mor(A)}(Z_{id_A},Z_g) \cong Hom_A(A,X) \cong X. $$
In particular, we get $Hom_{Mor(A)}(Z_{id_A},Z_{(0\to Y)}) = 0$, and $Z_{(0\to Y)} \in Gen(Z)$ if and only if $Z_{(0\to Y)} \in Gen(Z_\omega)$ if and only if $Y \in Gen(T)$. On the other hand, by Lemma \ref{translation}, $Z_{(0\to Y)} \in Z_\omega^{\perp_1}$ if and only if $Y \in \Dcal_\omega$. Thus, $T$ is silting with respect to $\omega$.

Suppose conversely that $T$ is silting, so that $Z_\omega$ is partial tilting by (1). As $Z_{id_A}$ is projective injective in $\Bcal\Lcal$ by the proof of Lemma~\ref{lem hered}, also $Z := Z_\omega \oplus Z_{id_A}$ is partial tilting. It remains to show that $Z^{\perp_1} \subseteq Gen(Z)$. To this end, let $g\colon X \to Y$ be a morphism in $Mod(A)$ such that $Z_g \in Z^{\perp_1}$, that is $Coker(g) \in \Dcal_\omega$. Hence, there is a surjection $p_0\colon T^{(I)} \to Coker(g)$ which we can lift to a map $p\colon Z_\omega^{(I)} \to Z_g$ (we recover $p_0$ from $p$ by passing to cokernels). Furthermore, we can take a surjection $q_0\colon A^{(J)} \to X$ which canonically extends to a map $q\colon Z_{id_A} \to Z_g$. A simple diagram chase reveals that $\big({^p_q}\big)\colon Z_\omega^{(I)} \oplus Z_{id_A}^{(J)} \to Z_g$ is surjective. Thus, $Z_g \in Gen(Z)$.
\end{proof}

\subsection{Silting classes}
We are particularly interested in the torsion class $\Dcal_\omega$ associated with a partial silting module $T_1$. Since, by \cite[Theorem 3.12]{AMV}, the module $T_1$ can be completed to a silting module $T=T_0\oplus T_1$ with $Gen(T)=\Dcal_\omega$, we call $\Dcal_\omega$ a  \textbf{silting class}. Note that silting classes are always \textbf{definable}, i.e., they are closed for direct limits, products and pure submodules in $Mod(A)$ (see \cite[Corollary 3.5]{AMV}). Here, we will give a finite-type characterisation of silting classes which generalises Theorem~\ref{tilting finite type} (also compare to Lemma \ref{mono}).

\begin{theorem}\label{main silting classes}
Let $\Dcal$ be a full subcategory of $Mod(A)$. Then the following are equivalent.
\begin{enumerate}
\item $\Dcal$ is a silting class;
\item $\Dcal=\Dcal_\Sigma\,$ for a set $\Sigma$ of objects in $\Lcal$;
\item $\Dcal=\Dcal_\Sigma\,$ for a set $\Sigma$ of objects in $\Bcal\Lcal$ and $\Dcal$ is closed for coproducts.
\end{enumerate}
\end{theorem}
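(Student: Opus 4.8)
The plan is to prove the cycle of implications $(2)\Rightarrow(3)\Rightarrow(1)\Rightarrow(2)$, exploiting the translation to the morphism category provided by Lemmas~\ref{lem silting to tilting} and~\ref{translation} together with the tilting finite-type machinery of Section~\ref{section tilting} applied inside the hereditary exact category $\Bcal\Lcal$. The implication $(2)\Rightarrow(3)$ is essentially formal: every object of $\Lcal$ is an object of $\Bcal\Lcal$, and a class of the form $\Dcal_\Sigma$ with $\Sigma\subseteq\Lcal$ is always closed under coproducts since each $\sigma$ is a map between \emph{finitely generated} projectives, so $Hom_A(\sigma,-)$ commutes with coproducts. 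Hence one only needs to record that $\Dcal_\Sigma$ with $\Sigma\subseteq\Lcal$ is also expressible with $\Sigma$ viewed inside $\Bcal\Lcal$, which is immediate.

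For $(1)\Rightarrow(2)$: if $\Dcal$ is a silting class, pick a partial silting module $T_1$ with projective presentation $\omega$ and $\Dcal=\Dcal_\omega$. By Lemma~\ref{lem silting to tilting}(1), $Z_\omega$ is partial tilting in $Mor(A)$, and in fact it lies in $\Bcal\Lcal$, which by Lemma~\ref{lem hered} is hereditary with enough projectives. I would like to run the analogue of Theorem~\ref{Main 1}(2) inside $\Bcal\Lcal$ to replace $Z_\omega$ by a \emph{set} $\Sigma$ of ``small'' objects of $\Bcal\Lcal$ with $\Sigma^{\perp_1}=Z_\omega^{\perp_1}$ (orthogonality taken in $Mor(A)$, which by Lemma~\ref{lem hered} agrees with the $\Kcal^b$-computation). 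Concretely, one forms the cotorsion pair generated by $Z_\omega$ (together with $Z_{id_A}$, or the projective generators of $\Bcal\Lcal$), filters $Z_\omega$ by objects of $\Bcal\Lcal$ that are ``countably presented'' in the appropriate sense — i.e. presented by countably generated projective $A$-modules — using the Hill-lemma / Eklof-type argument behind \cite[Proposition 3.3]{BET}, and sets $\Sigma$ to be the set of filtration quotients. One must check that these quotients genuinely lie in $\Bcal\Lcal$ (closure of $\Bcal\Lcal$ under the filtration, which holds as noted in Section~\ref{section mor}) and that the resulting $\Sigma^{\perp_1}$ in $Mor(A)$ equals $Z_\omega^{\perp_1}$ (one inclusion from $\Sigma\subseteq{^{\perp_1}(Z_\omega^{\perp_1})}$, the other from $Z_\omega$ being $\Sigma$-filtered plus Proposition~\ref{prop cotorsion pairs}). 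Then, translating back via Lemma~\ref{translation}, for $Z_g\in Mor(A)$ we have $Z_g\in\Sigma^{\perp_1}\iff Coker(g)\in\Dcal_\Sigma$ and $Z_g\in Z_\omega^{\perp_1}\iff Coker(g)\in\Dcal_\omega$; feeding in $Z_{(0\to Y)}$ for arbitrary $Y$ (so $Coker=Y$) yields $\Dcal_\Sigma=\Dcal_\omega=\Dcal$. Finally, since the objects of $\Sigma$ are morphisms between \emph{countably generated} projectives rather than finitely generated ones, they need not lie in $\Lcal$; to land in $\Lcal$ one presents each such countably generated projective-to-projective map as a countable \emph{direct limit}, equivalently a filtered colimit, of maps in $proj(A)$, and observes that $\Dcal_{Z_\sigma}$ for such a $\sigma$ coincides with $\Dcal_{\Sigma_\sigma}$ for the corresponding countable set $\Sigma_\sigma\subseteq\Lcal$ (a map out of a finitely generated projective into $X$ factors through the colimit map iff it factors through one of the stages, using finite generation of the source). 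Replacing $\Sigma$ by $\bigcup_\sigma\Sigma_\sigma$ gives the required set in $\Lcal$.

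For $(3)\Rightarrow(1)$: suppose $\Dcal=\Dcal_\Sigma$ with $\Sigma\subseteq\Bcal\Lcal$ and $\Dcal$ closed under coproducts. Consider the cotorsion pair in $Mor(A)$ generated by $\Sigma\cup\{Z_{id_A}\}$; by Lemma~\ref{lem hered} its left-hand class consists of objects of projective dimension at most one in $Mor(A)$, and by Lemma~\ref{translation}, $(\Sigma\cup\{Z_{id_A}\})^{\perp_1}=\{Z_g\mid Coker(g)\in\Dcal_\Sigma\}$, which is closed under coproducts in $Mor(A)$ precisely because $\Dcal_\Sigma$ is closed under coproducts in $Mod(A)$ (cokernels commute with coproducts). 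Invoking the tilting characterisation \cite[Theorem 4.1]{AC} inside the hereditary category $\Bcal\Lcal$ (or, more safely, the cotorsion-pair version: a cotorsion pair whose right class is closed under coproducts and whose left class has bounded projective dimension $\le1$ is a tilting cotorsion pair), we obtain a tilting object $Z$ in $Mor(A)$, which we may take of the form $Z_\omega\oplus Z_{id_A}$ for a suitable projective presentation $\omega$, with $Z^{\perp_1}=(\Sigma\cup\{Z_{id_A}\})^{\perp_1}$ and hence $Gen(Z)=Z^{\perp_1}$. By Lemma~\ref{lem silting to tilting}(2), the corresponding $A$-module $T$ is silting with respect to $\omega$, and $\Dcal_\omega=Gen(T)$; unwinding the translation as above (test against $Z_{(0\to Y)}$) gives $\Dcal_\omega=\Dcal_\Sigma=\Dcal$, so $\Dcal$ is a silting class.

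The main obstacle I anticipate is the bookkeeping in $(1)\Rightarrow(2)$: transporting Theorem~\ref{Main 1}(2) and \cite[Proposition 3.3]{BET} from $Mod(A)$ into the exact category $\Bcal\Lcal$, and then genuinely descending from ``countably generated projective'' maps to a \emph{set} of maps in $proj(A)$ — one must be careful that the colimit presentation of a countably generated projective-to-projective map can be arranged entirely within $proj(A)$ (which uses that countably generated projectives are countable direct limits of finitely generated projectives with split injections, after stabilising) and that divisibility is genuinely detected stagewise. The $(3)\Rightarrow(1)$ direction also requires care in citing the tilting finite-type result in the relative setting of $\Bcal\Lcal$ rather than a full module category; phrasing it via completeness of the cotorsion pair plus the projective-dimension bound from Lemma~\ref{lem hered}, rather than quoting \cite[Theorem 4.1]{AC} verbatim, seems the cleanest route.
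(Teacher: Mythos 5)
Your implications $(2)\Rightarrow(3)$ and $(3)\Rightarrow(1)$ are essentially the paper's own argument, and your worry about needing a ``relative'' tilting theorem inside $\Bcal\Lcal$ is unfounded: since $Mor(A)\simeq Mod(T_2(A))$ is a genuine module category, \cite[Theorem 4.1]{AC} applies there verbatim, the only role of $\Bcal\Lcal$ being to guarantee the projective dimension bound via Remark~\ref{rem proj dim} (plus the harmless observation that $Z_{id_A}$ can be added to the tilting object). The genuine gap is in your $(1)\Rightarrow(2)$. The paper does not pass through the countable-type reduction at all: it applies the finite-type Theorem~\ref{tilting finite type} to the partial tilting object $Z_\omega$ in the module category $Mor(A)$, obtaining a set $\Sigma$ of \emph{finitely presented} objects with $\Sigma^{\perp_1}=Z_\omega^{\perp_1}$ and $\Sigma\subseteq{^{\perp_1}(Z_\omega^{\perp_1})}\subseteq\Bcal\Lcal$, and then uses that the finitely presented objects of $Mor(A)$ lying in $\Bcal\Lcal$ are precisely those of $\Lcal$. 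You instead produce only countably presented objects (maps between countably generated projectives) and then try to descend to $\Lcal$ by writing each such $\sigma\colon P\to Q$ as a countable colimit of maps $\sigma_n\in proj(A)$ and asserting $\Dcal_{Z_\sigma}=\Dcal_{\Sigma_\sigma}$; this assertion is unjustified and fails as stated.

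Indeed, divisibility with respect to $\sigma$ requires that every map $P\to X$ (whose source is the countably generated module $P$) factor through $\sigma$, whereas your parenthetical justification concerns maps out of finitely generated projectives factoring through the colimit, so it addresses a different factorisation problem. Neither inclusion of the claimed equality is automatic: if each restriction $P_n\to X$ factors through $\sigma_n$, the chosen factorisations $Q_n\to X$ need not be compatible with the transition maps and hence need not assemble into a map $Q\to X$ (a Mittag--Leffler-type obstruction); conversely, a map $P_n\to X$ need not extend along $P_n\to P$, since these maps are not split in general. Note also that $\Dcal_{\Sigma_\sigma}$ with $\Sigma_\sigma\subseteq\Lcal$ is automatically closed under coproducts, while there is no reason for $\Dcal_{Z_\sigma}$ to be when $P$ is only countably generated --- this is exactly why statement (3) of the theorem carries the extra coproduct hypothesis --- so an objectwise equality of this kind cannot be expected in general. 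In effect, the passage from countably generated to finitely generated data is the deep content of the finite-type theorem of \cite{BH} (see also \cite{BS}); a stagewise colimit argument would bypass it, which is a strong indication that the step is wrong. The repair is simply to invoke Theorem~\ref{tilting finite type} in $Mor(A)$ directly and identify the resulting finitely presented objects of $\Bcal\Lcal$ with objects of $\Lcal$, as the paper does.
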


\begin{proof}
$(1)\Rightarrow (2)$ Assume that $\Dcal=\Dcal_\omega$ for a partial silting module $T_1$ with respect to $\omega$. Then, $Z_\omega$ is partial tilting by Lemma~\ref{lem silting to tilting}(1) and, by Theorem \ref{tilting finite type}, there is a set of finitely presented objects $\Sigma$ in $Mor(A)$ such that $\Sigma^{\perp_1}=Z_\omega^{\perp_1}$. Moreover, following the construction in Section \ref{section tilting}, $\Sigma$ is contained in ${^{\perp_1}(Z_\omega^{\perp_1})} \subseteq \Bcal\Lcal$. However, the finitely presented objects of $Mor(A)$ which are contained in $\Bcal\Lcal$ are precisely those in $\Lcal$, and it follows from Lemma \ref{translation} that $\Dcal=\Dcal_\Sigma$.

The implication $(2)\Rightarrow (3)$ is clear. We are left to prove $(3)\Rightarrow (1)$. Assume that $\Dcal=\Dcal_\Sigma\,$ for a set $\Sigma$ of objects in $\Bcal\Lcal$ and consider the cotorsion pair $(^{\perp_1}(\Sigma^{\perp_1}),\Sigma^{\perp_1})$ in $Mor(A)$. By assumption, $\Dcal$ is closed for coproducts in $Mod(A)$ and, thus, so is $\Sigma^{\perp_1}$ in $Mor(A)$ by Lemma~\ref{translation}. Moreover, every object in $^{\perp_1}(\Sigma^{\perp_1})\subseteq\Bcal\Lcal$ has projective dimension at most one (see Remark \ref{rem proj dim}). Hence, $\Sigma^{\perp_1}$ is a tilting class in $Mor(A)$ by \cite[Theorem 4.1]{AC} (see also the discussion of Theorem \ref{tilting finite type} in Section \ref{section tilting}).
Let $Z_\omega$ be an associated tilting module with $Z_\omega^{\perp_1}=\Sigma^{\perp_1}$. Note that, by construction, $Z_\omega$ belongs to ${^{\perp_1}(\Sigma^{\perp_1})} \subseteq \Bcal\Lcal$. Now it follows from Lemma~\ref{translation} and Lemma~\ref{lem silting to tilting}(2) that $Coker(\omega)$ is a silting $A$-module with respect to $\omega$ and $\Dcal_\omega=\Dcal$. 
\end{proof}

\begin{remark}
The implication $(1)\Rightarrow (2)$ in Theorem \ref{main silting classes} also follows from recent independent work by Angeleri H\"ugel and Hrbek (see \cite[Theorem 2.3]{AH}). Moreover, the authors show that over a noetherian ring definable torsion classes coincide precisely with silting classes (\cite[Corollary 3.7]{AH}) and that such a statement is no longer true over arbitrary rings (see \cite[Example 5.4]{AH}).
\end{remark}

\subsection{Silting ring epimorphisms}
In \cite{AMV2}, it was shown that partial silting modules give rise to ring epimorphisms. Let us briefly recall this construction. Fix a partial silting module $T_1$ with respect to a presentation $\omega$ in $Proj(A)$. It follows that the intersection 
$$\Dcal_\omega\cap T_1^{\perp_0}=\{X\in Mod(A)\mid Hom_A(\omega,X) \text{ is an isomorphism} \}$$ 
is a bireflective and extension-closed subcategory of $Mod(A)$ (compare to the similar construction for universal localisations in \S\ref{subsection loc}). In particular, there is an associated ring epimorphism $A\ra B$ with $Tor_1^A(B,B)=0$ and such that $\Xcal_B=\Dcal_\omega\cap T_1^{\perp_0}$ (see \S\ref{subsection ring epi}). We say that $A\ra B$ is a \textbf{silting ring epimorphism}. Clearly, tilting ring epimorphisms provide examples of silting ring epimorphisms.

\begin{example} \label{silting pd1}
Let $f\colon A\ra B$ be a ring epimorphism with $Tor_1^A(B,B)=0$ and such that the projective dimension of the $A$-module $B$ is at most one. Such a situation arises on various occasions, e.g.~if $A$ is hereditary, or if $A$ is commutative and $B$ is a localisation of $A$ at a countable multiplicative set. We will show that $B\oplus B/f(A)$ is a silting $A$-module which turns $f$ into a silting ring epimorphism. Note that this generalises Example~\ref{tilting pd1}.

Let $0 \to P_1 \overset{p}\to P_0 \to B \to 0$ be a projective presentation of $B$ as an $A$-module and consider a lift of $f$ to a map $\tilde{f}\colon Z_{(0 \to A)} \to Z_p$ in $Mor(A)$ (so that we recover $f$ by passing to cokernels). A pushout of $\tilde{f}$ along the obvious inclusion $Z_{(0 \to A)} \to Z_{id_A}$ induces a short exact sequence in $Mor(A)$ whose last term is the mapping cone of $\tilde{f}$ (when we view $\tilde{f}$ as a map of two-term complexes):
$$
\vcenter{
\xymatrix{
0 \ar[r] & 0 \ar[r] \ar[d] & A \oplus P_1 \ar[r] \ar[d]^{id_A \oplus p} & A \oplus P_1 \ar[r] \ar[d]^\omega & 0\phantom{.} \\ 
0 \ar[r] & A \ar[r]        & A \oplus P_0 \ar[r]                        & P_0 \ar[r]                        & 0.
}
}
\eqno{(*)}
$$
Clearly, $\omega$ is a projective presentation of $B/f(A)$.

We claim that $B \oplus B/f(A)$ is a silting $A$-module with respect to $p\oplus\omega$. First, note that $\Dcal_{p \oplus \omega} = \Dcal_\omega$ thanks to~$(*)$. Thus, we need to show that $Gen(B)=\Dcal_\omega$. Observe that $\textbf{R}Hom_A(f,B^{(I)})$ is an isomorphism for every set $I$ since $Hom_A(f,B^{(I)})$ is an isomorphism and $Ext^1_A(B,B^{(I)}) = Ext^1_B(B,B^{(I)}) = 0$. Since $\omega$ is quasi-isomorphic to the mapping cone of $f$, it follows that $Hom_A(\omega,B^{(I)})$ is an isomorphism and, in particular, $Gen(B) \subseteq \Dcal_\omega$. Conversely, given $X \in \Dcal_\omega$, we apply $Hom_{Mor(A)}(-,Z_{(0\to X)})$ to~$(*)$ and obtain an exact sequence
$$\xymatrix{ Hom_{Mor(A)}(Z_{id_A}\oplus Z_p, Z_{(0\to X)}) \ar[r] & Hom_{Mor(A)}(Z_{(0\to A)}, Z_{(0\to X)}) \ar[r] & 0.} $$
Since, by construction, the latter epimorphism identifies with
$$ Hom_A(f,X)\colon Hom(B,X) \to Hom_A(A,X) \cong X, $$
we have $X \in Gen(B)$. This proves the claim.

Finally, $X \in Mod(A)$ is in the essential image of the restriction along $f$ if and only if $Hom_A(f,X)$ is an isomorphism if and only if $\textbf{R}Hom_A(f,X)$ is an isomorphism if and only if $Hom_A(\omega,X)$ is an isomorphism. Hence $\Xcal_B = \Dcal_\omega \cap (B/f(A))^{\perp_0}$.
\end{example}

Now we can state and prove our main results. We first show that silting ring epimorphisms are of countable type.

\begin{theorem}\label{silting countable type}
Let $A\ra B$ be a silting ring epimorphism. Then there is a set $\Sigma$ of countably generated objects in $\Bcal\Lcal$ such that $\Xcal_B=\Dcal_\Sigma\cap Coker(\Sigma)^{\perp_0}$ where $Coker(\Sigma)=\{Coker(\sigma)\mid Z_\sigma\in\Sigma\}$.
\end{theorem}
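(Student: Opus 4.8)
The plan is to transport the whole problem into the morphism category $Mor(A)\cong Mod(T_2(A))$ and invoke Theorem~\ref{Main 1}(2) there. By the definition of a silting ring epimorphism, fix a partial silting $A$-module $T_1$ with a projective presentation $\omega$ in $Proj(A)$ such that $\Xcal_B=\Dcal_\omega\cap T_1^{\perp_0}$, and note $T_1=Coker(\omega)$. By Lemma~\ref{lem silting to tilting}(1) the object $Z_\omega$ is partial tilting in $Mor(A)$, so in particular it has projective dimension at most one as a $T_2(A)$-module. Applying Theorem~\ref{Main 1}(2) to the ring $T_2(A)$ yields a set $\Sigma$ of countably presented objects of $Mor(A)$ with $\Sigma^{\perp_1}=Z_\omega^{\perp_1}$ and $\Sigma^\perp=Z_\omega^\perp$; moreover, by the construction in the proof of that theorem, $\Sigma$ may be chosen inside ${}^{\perp_1}(Z_\omega^{\perp_1})$.

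Next I would check that $\Sigma$ in fact consists of countably generated objects of $\Bcal\Lcal$. Since $Z_\omega\in\Bcal\Lcal$ and $\Bcal\Lcal$ is the left-hand class of the cotorsion pair generated by $Z_{(A\to 0)}$, the same reasoning as in the proof of Theorem~\ref{main silting classes} gives ${}^{\perp_1}(Z_\omega^{\perp_1})\subseteq\Bcal\Lcal$; hence every member of $\Sigma$ has the form $Z_\sigma$ with $\sigma$ a morphism between projective $A$-modules. As a $T_2(A)$-module $Z_g\colon M\to N$ is countably generated precisely when both $M$ and $N$ are, being countably presented over $T_2(A)$ forces the domain and codomain of $\sigma$ to be countably generated, so $\Sigma$ is a set of countably generated objects of $\Bcal\Lcal$.

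It then remains to translate the two equalities $\Sigma^{\perp_1}=Z_\omega^{\perp_1}$ and $\Sigma^\perp=Z_\omega^\perp$ from $Mor(A)$ back to $Mod(A)$ by evaluating them on the objects $Z_{(0\to X)}$, $X\in Mod(A)$. Lemma~\ref{translation} identifies $Z_{(0\to X)}\in\Sigma^{\perp_1}$ with $X\in\Dcal_\Sigma$ and $Z_{(0\to X)}\in Z_\omega^{\perp_1}$ with $X\in\Dcal_\omega$, so $\Sigma^{\perp_1}=Z_\omega^{\perp_1}$ gives $\Dcal_\Sigma=\Dcal_\omega$. For the $\perp_0$-parts one uses the natural isomorphism $Hom_{Mor(A)}(Z_\tau,Z_{(0\to X)})\cong Hom_A(Coker(\tau),X)$, valid for any $\tau$, which translates $Z_{(0\to X)}\in\Sigma^{\perp_0}$ into $X\in Coker(\Sigma)^{\perp_0}$ and $Z_{(0\to X)}\in Z_\omega^{\perp_0}$ into $X\in T_1^{\perp_0}$. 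Feeding this into $\Sigma^\perp=Z_\omega^\perp$ and combining with $\Dcal_\Sigma=\Dcal_\omega$ yields $\Dcal_\Sigma\cap Coker(\Sigma)^{\perp_0}=\Dcal_\omega\cap T_1^{\perp_0}=\Xcal_B$, which is the assertion.

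I expect the only delicate points to be bookkeeping ones. First, one must read off from the proof of Theorem~\ref{Main 1}(2) (not only its statement) that $\Sigma$ can be chosen inside ${}^{\perp_1}(Z_\omega^{\perp_1})$, since this is what places it inside $\Bcal\Lcal$. Second, it is crucial that Theorem~\ref{Main 1}(2) delivers the full equality $\Sigma^\perp=Z_\omega^\perp$ and not merely $\Sigma^{\perp_1}=Z_\omega^{\perp_1}$: it is the former that controls the $Coker(\Sigma)^{\perp_0}$ term, and this is exactly the place where the classical finite-type theorem for tilting classes would be insufficient. Everything else reduces to routine manipulations in $Mor(A)$.
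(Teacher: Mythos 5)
Your proposal is correct and follows essentially the same route as the paper: pass to $Mor(A)$ via Lemma~\ref{lem silting to tilting}(1), apply Theorem~\ref{Main 1}(2) there (noting, from its proof, that $\Sigma\subseteq{}^{\perp_1}(Z_\omega^{\perp_1})\subseteq\Bcal\Lcal$), and translate back by evaluating $\Sigma^\perp=Z_\omega^\perp$ on the objects $Z_{(0\to X)}$. The extra bookkeeping you supply (countable generation over $T_2(A)$, the identification $Hom_{Mor(A)}(Z_\tau,Z_{(0\to X)})\cong Hom_A(Coker(\tau),X)$, and the redundant equality $\Dcal_\Sigma=\Dcal_\omega$) is all accurate and merely makes explicit what the paper's proof leaves terse.
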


\begin{proof}
Let $T_1$ be a partial silting module with respect to a presentation $\omega$ such that $\Xcal_B=\Dcal_\omega\cap T_1^{\perp_0}$. The object $Z_\omega$ is partial tilting in $Mor(A)$ by Lemma~\ref{lem silting to tilting}(1) and, thus, by Theorem \ref{Main 1}(2), there is a set $\Sigma$ of countably generated objects in $Mor(A)$ such that $Z_\omega^\perp=\Sigma^\perp$. Note that, by construction, $\Sigma \subseteq {^{\perp_1}(Z_\omega^{\perp_1})} \subseteq \Bcal\Lcal$. We need to show that $\Dcal_\omega\cap T_1^{\perp_0}=\Dcal_\Sigma\cap Coker(\Sigma)^{\perp_0}$. Take $X$ in $Mod(A)$ and consider the object $Z_X := Z_{(0\ra X)}$ in $Mor(A)$. If $\sigma\in\Bcal\Lcal$, we have $Z_X \in Z_\sigma^{\perp_0}$ if and only if $Hom_A(\sigma,X)$ is injective, and by Lemma \ref{translation} also that $Z_X \in Z_\sigma^{\perp_1}$ if and only if $Hom_A(\sigma,X)$ is surjective.
In particular, $X$ belongs to $\Dcal_\omega\cap T_1^{\perp_0}$ if and only if $Z_X \in Z_\omega^\perp$, and similarly $X$ belongs to $\Dcal_\Sigma\cap Coker(\Sigma)^{\perp_0}$ if and only if $Z_X \in \Sigma^\perp$. It follows, $\Dcal_\omega\cap T_1^{\perp_0}=\Dcal_\Sigma\cap Coker(\Sigma)^{\perp_0}$ because $Z_\omega^\perp=\Sigma^\perp$.
\end{proof}

If we can choose the set $\Sigma$ in Theorem \ref{silting countable type} to be contained in $\Lcal\subseteq Mor(A)$, then the silting ring epimorphism turns out to be the universal localisation of $A$ at $\Sigma$. In general, however, silting ring epimorphisms will not be universal localisations. An example will be given in \S\ref{example not localisation}, but since the computation is more involved, we have postponed it to the very end of the paper.
On the other hand, it turns out that universal localisations are \emph{always} silting ring epimorphisms and, hence, are controlled by (partial) silting modules.

\begin{theorem}\label{Main 2}
Every universal localisation is a silting ring epimorphism.
\end{theorem}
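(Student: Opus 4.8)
The plan is to take a universal localisation $f\colon A \to A_\Sigma$ at a set $\Sigma$ of maps in $proj(A)$ and exhibit an explicit partial silting module whose associated silting ring epimorphism is $f$. Recall that the essential image $\Xcal_\Sigma$ of the restriction functor along $f$ is exactly $\Dcal_\Sigma \cap \Fcal_\Sigma$, i.e.\ the class of modules $X$ for which $Hom_A(\sigma,X)$ is an isomorphism for all $\sigma \in \Sigma$. Viewing each $\sigma\colon P \to Q$ as an object $Z_\sigma$ of $\Lcal = Mor(proj(A)) \subseteq \Bcal\Lcal$, this says precisely that $X \in \Xcal_\Sigma$ if and only if $Z_{(0\to X)} \in Z_\Sigma^{\perp}$, where $Z_\Sigma = \{Z_\sigma \mid \sigma \in \Sigma\}$ — using, as in the proof of Theorem \ref{silting countable type}, that $Z_{(0\to X)} \in Z_\sigma^{\perp_1}$ iff $Hom_A(\sigma,X)$ is surjective (Lemma \ref{translation}) and $Z_{(0\to X)} \in Z_\sigma^{\perp_0}$ iff $Hom_A(\sigma,X)$ is injective.

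The next step is to produce, from the set $Z_\Sigma$ of finitely presented objects in $\Lcal$ (each of which has projective dimension at most one in $Mor(A)$ by Lemma \ref{lem hered}/Remark \ref{rem proj dim}), a single partial tilting object of $Mor(A)$ with the same perpendicular category. This is exactly what Theorem \ref{Main 1}(1) gives, applied inside the category $Mor(A)$ (which is a module category over $T_2(A)$, so the theorem applies): there is a partial tilting object $Z_\omega$ of $Mor(A)$ with $Z_\Sigma^{\perp_1} = Z_\omega^{\perp_1}$ \emph{and} $Z_\Sigma^{\perp} = Z_\omega^{\perp}$. As in the proofs above, the construction places $Z_\omega$ in ${}^{\perp_1}(Z_\Sigma^{\perp_1}) \subseteq \Bcal\Lcal$, so $Z_\omega$ really is of the form $Z_\omega$ for a map $\omega$ between (possibly large) projective $A$-modules, and we may take $T_1 := Coker(\omega)$. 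By Lemma \ref{lem silting to tilting}(1), $Z_\omega$ being partial tilting in $Mor(A)$ means exactly that $T_1$ is a partial silting $A$-module with respect to $\omega$; let $A \to B$ be the associated silting ring epimorphism, so $\Xcal_B = \Dcal_\omega \cap T_1^{\perp_0}$.

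It remains to identify $\Xcal_B$ with $\Xcal_\Sigma$, which then forces $A \to B$ and $A \to A_\Sigma$ to be equivalent ring epimorphisms (both are determined by their bireflective essential image, by \S\ref{subsection ring epi}). For any $X \in Mod(A)$ we have the chain of equivalences: $X \in \Dcal_\omega \cap T_1^{\perp_0}$ iff $Z_{(0\to X)} \in Z_\omega^{\perp} = Z_\Sigma^{\perp}$ iff $Hom_A(\sigma,X)$ is an isomorphism for all $\sigma \in \Sigma$ iff $X \in \Xcal_\Sigma$, where the first equivalence is the reformulation in terms of $Z_{(0\to X)}$ used above (note $Hom_{Mor(A)}(Z_\sigma, Z_{(0\to X)}) = 0$ automatically, since a map $P \to 0$ must vanish, so $Z_\sigma^{\perp_0}$ for $Z_{(0\to X)}$ is controlled by injectivity of $Hom_A(\sigma,X)$ on the $Q$-side — this is the content of the cokernel/kernel computation and should be spelled out carefully), the middle one is $Z_\Sigma^{\perp} = Z_\omega^{\perp}$ from Theorem \ref{Main 1}(1), and the last is the description of $\Xcal_\Sigma$ from Theorem \ref{uni loc}. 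Hence $\Xcal_B = \Xcal_\Sigma$ and $A \to A_\Sigma$ is a silting ring epimorphism.

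I expect the main obstacle to be purely bookkeeping rather than conceptual: one must verify carefully that Theorem \ref{Main 1}(1), stated for a ring, is being applied correctly to $Mor(A) \simeq Mod(T_2(A))$ and that the partial tilting object it outputs genuinely lands in $\Bcal\Lcal$ (so that it is of the form $Z_\omega$ for an honest map of projectives, which is what Lemma \ref{lem silting to tilting} requires as input) — this is the same subtlety already handled in the proof of Theorem \ref{main silting classes}, $(1)\Rightarrow(2)$, and in Theorem \ref{silting countable type}, so it can be imported. The one genuinely delicate point to state precisely is the translation dictionary $Z_{(0\to X)} \in Z_\sigma^{\perp_0}$ vs.\ vs.\ $Z_\sigma^{\perp_1}$ in terms of injectivity and surjectivity of $Hom_A(\sigma,X)$; once that is in place, the argument is a short diagram-free deduction.
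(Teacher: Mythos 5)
Your proposal is correct and follows essentially the same route as the paper: apply Theorem~\ref{Main 1}(1) inside $Mor(A)\simeq Mod(T_2(A))$ to the set $\{Z_\sigma\mid\sigma\in\Sigma\}$, observe that the resulting partial tilting object lies in $\Bcal\Lcal$, pass to a partial silting $A$-module via Lemma~\ref{lem silting to tilting}(1), and identify the two bireflective subcategories through the translation used in the proof of Theorem~\ref{silting countable type}. One small slip in wording: $Hom_{Mor(A)}(Z_\sigma,Z_{(0\to X)})$ is not automatically zero but equals the kernel of $Hom_A(\sigma,X)$, which is precisely why $Z_{(0\to X)}\in Z_\sigma^{\perp_0}$ amounts to injectivity of $Hom_A(\sigma,X)$, as you correctly conclude.
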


\begin{proof}
Let $\Sigma$ be a set of objects in $\Lcal\subseteq Mor(A)$ and let $A\ra A_\Sigma$ be the associated universal localisation. We need to show that there is a partial silting $A$-module $T_1$ with respect to a presentation $\omega$ such that $\Xcal_\Sigma=\Dcal_\omega\cap T_1^{\perp_0}$. By Theorem \ref{Main 1}(1), there is a partial tilting module $Z_\omega$ in $Mor(A)$ such that $Z_\omega^\perp=\Sigma^\perp$. By construction, $Z_\omega$ must belong to $\Bcal\Lcal$, and by Lemma~\ref{lem silting to tilting}, $T_1:=Coker(\omega)$ is a partial silting $A$-module with respect to $\omega$. Hence, it suffices to check that $\Xcal_\Sigma=\Dcal_\Sigma\cap Coker(\Sigma)^{\perp_0}$ coincides with $\Dcal_\omega\cap T_1^{\perp_0}$. But this follows from the equality $Z_\omega^\perp=\Sigma^\perp$ as shown in the proof of Theorem \ref{silting countable type}.
\end{proof}

\begin{corollary}
Let $A\ra A_\Sigma$ be a universal localisation. Then there is a silting module $T$ such that $A_\Sigma$ is isomorphic (as rings) to $End_A^{op}(T)/I$ for some two-sided idempotent ideal $I$ of $End_A^{op}(T)$.
\end{corollary}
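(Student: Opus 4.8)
The plan is to combine Theorem~\ref{Main 2} with the general theory of silting ring epimorphisms recalled in \S\ref{subsection ring epi}. By Theorem~\ref{Main 2}, the universal localisation $f\colon A\ra A_\Sigma$ is a silting ring epimorphism, so there is a partial silting $A$-module $T_1$ with respect to a presentation $\omega$ such that $\Xcal_{A_\Sigma}=\Dcal_\omega\cap T_1^{\perp_0}$. First I would complete $T_1$ to a (large) silting module $T=T_0\oplus T_1$ with $Gen(T)=\Dcal_\omega$, which is possible by \cite[Theorem 3.12]{AMV} as already used in \S\ref{section silting}. The essential point is then to identify $A_\Sigma$ with an idempotent quotient of $\End_A^{op}(T)$.

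The key step is the following general fact about silting ring epimorphisms, which I expect to extract from~\cite{AMV2}: if $A\ra B$ is the silting ring epimorphism associated with a partial silting module $T_1$, completed to a silting module $T=T_0\oplus T_1$, then $B$ is isomorphic as a ring to $\End_A^{op}(T)/I$, where $I$ is the two-sided ideal of $\End_A^{op}(T)$ consisting of those endomorphisms that factor through a module in $Add(T_0)$ — equivalently, $I$ is the trace ideal of $Add(T_0)$ in $\End_A^{op}(T)$. This ideal is idempotent because $T_0$ is $Ext$-selforthogonal and so the category $Add(T_0)$ is closed under the relevant compositions; concretely, any map factoring through $Add(T_0)$ can be refactored using that $Add(T_0)$ has split idempotents and no self-extensions, giving $I=I^2$. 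I would therefore state this as the body of the proof: apply Theorem~\ref{Main 2}, invoke the identification of the codomain of a silting ring epimorphism with such an endomorphism quotient, and observe the idempotency of $I$.

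More precisely, the argument I would write goes: the ring epimorphism $f\colon A\ra A_\Sigma$ has essential image $\Xcal_{A_\Sigma}=\Dcal_\omega\cap T_1^{\perp_0}$, and by the reflection construction the ring $A_\Sigma$ is recovered as $\End_A(A^\bullet)$ where $A^\bullet$ is a bireflection of the regular module $A$ into $\Xcal_{A_\Sigma}$. Since $T$ is silting with $Gen(T)=\Dcal_\omega$, one can take the universal $Gen(T)$-approximation (equivalently, the relevant part of a silting approximation triangle) $A\ra T^0$ with $T^0\in Add(T)$, and the bireflection of $A$ is then built from $T^0$ by killing the contribution of $T_0$; passing to endomorphism rings yields $A_\Sigma\cong\End_A^{op}(T)/I$ with $I$ as above. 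The $Tor_1^A(A_\Sigma,A_\Sigma)=0$ condition, automatic for universal localisations, matches the fact that $\Xcal_{A_\Sigma}$ is extension-closed, so no further hypothesis is needed.

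The main obstacle I anticipate is bookkeeping rather than conceptual: pinning down exactly which quotient of $\End_A^{op}(T)$ one obtains and verifying idempotency of $I$ cleanly, since $T$ here is genuinely large and $Add(T_0)$ need not be functorially finite. I would handle this by citing the relevant statement from~\cite{AMV2} on silting ring epimorphisms (where the endomorphism-ring description is carried out), reducing the corollary to a direct application of Theorem~\ref{Main 2}; the idempotency of $I$ then follows formally from $\Xcal_{A_\Sigma}$ being bireflective and extension-closed, i.e.\ from $Tor_1^A(A_\Sigma,A_\Sigma)=0$ combined with the fact that $I/I^2$ measures precisely this $Tor$ in the surjective case, so that here $I=I^2$.
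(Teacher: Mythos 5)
Your proposal is correct and follows essentially the same route as the paper: apply Theorem~\ref{Main 2} to realise the universal localisation as a silting ring epimorphism and then invoke the endomorphism-ring description of silting ring epimorphisms from \cite{AMV2} (the paper cites \cite[Theorem 3.5]{AMV2}), which already contains the idempotent-quotient statement. Your supplementary heuristics are not needed for the argument --- and the claim that idempotency of $I$ follows from $Tor_1^A(A_\Sigma,A_\Sigma)=0$ via $I/I^2$ is imprecise, since that $Tor$ is taken over $A$ while $I$ lives in $End_A^{op}(T)$ --- but this does not affect the proof, as the cited result carries that part.
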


\begin{proof}
By Theorem \ref{Main 2}, every universal localisation arises from a partial silting module and, thus, we can apply \cite[Theorem 3.5]{AMV2}.
\end{proof}

\begin{remark}
Let $A\ra A_\Sigma$ be a universal localisation and $T_1$ a partial silting module with respect to $\omega$ such that $\Xcal_\Sigma=\Dcal_\omega\cap T_1^{\perp_0}$.
Then, in general, the object $Z_\omega$ does not belong to $\Lcal\subseteq Mor(A)$. But if it does, the situation becomes significantly nicer. Instead of localising at the given set $\Sigma$, it is enough to consider the localisation at $\{\omega\}$. In this case, $A_\Sigma$ will always be finitely generated when seen as an $A$-module. Examples of such localisations can be found in \cite{MS}.
\end{remark}

\begin{remark}
Given a set $\Sigma$ of objects in $\Lcal\subseteq Mor(A)$, we define the universal localisation of $T_2(A)$ (the lower triangular matrix ring in $A$) at $\Sigma$ by localising with respect to a set of monomorphic projective presentations of the objects in $\Sigma$. In particular, we are in the setting of Lemma \ref{mono} and the universal localisation $T_2(A)\ra T_2(A)_\Sigma$ is a tilting ring epimorphism (see Proposition \ref{mono tilting epi}).

The situation becomes even nicer if we assume that $Z_{id_A}$ belongs to $\Sigma$ (note that adding $Z_{id_A}$ to $\Sigma$ does not affect the universal localisation $A\ra A_\Sigma$). Since $Z_{id_A}$ is projective in $Mor(A) \simeq Mod(T_2(A))$, we can invoke Example~\ref{expl proj}. The trace ideal of $Z_{id_A}$ in $T_2(A)$ is generated by $e = \big(\begin{smallmatrix}1 & 0 \\ 0 & 0\end{smallmatrix}\big)$ and, hence, the universal localisation $T_2(A) \to T_2(A)_\Sigma$ factors as
$$ T_2(A) \to T_2(A)/(e) \cong A \to T_2(A)_\Sigma. $$
The left hand side map is the universal localisation of $T_2(A)$ at the set $\{0 \to Z_{id_A}\}$, and one can check that the map $A \to T_2(A)_\Sigma$ identifies with the usual universal localisation $A \to A_\Sigma$.
The philosophy is that the morphism category does not only serve to reduce silting to tilting, but also to replace general universal localisations by localisations at sets of monomorphic maps. 
\end{remark}

\subsection{A counterexample}\label{example not localisation}

We conclude with the promised example of a silting ring epimorphism which is not a universal localisation.
It is essentially taken from~\cite[Remarks 2.1]{HS} and it was communicated to us by Joe Chuang and Jorge Vit\'{o}ria.

First, we recollect basic facts from commutative algebra about local cohomology. We refer to~\cite{BrSh} for more details, and also to~\cite{Hu} for a brief but efficient introduction to the topic. If $A$ is a commutative noetherian ring, $I$ is an ideal of $A$ and $X\in Mod(A)$, one denotes by $\Gamma_I(X)$ the set of all elements of $X$ annihilated by a power of $I$. The \textbf{local cohomology functors} are defined as the right derived functors $H^i_I(X) := \textbf{R}^i\Gamma_I(X)$. They clearly depend only on the radical $\sqrt{I}$ of $I$ rather than on $I$ itself. A key fact (see~\cite[\S5.1]{BrSh} or~\cite[\S2.1]{Hu}) is that the local cohomology of $X$ is isomorphic to the cohomology of $C_I\otimes_{A}X$, where $C_I$ is the \v{C}ech complex for any chosen finite set of generators $x_1,\dots,x_n$ of $I$:
\[ C_I = \bigotimes_{i=1}^n (A\to A_{x_i}) = \big(A \to \bigoplus_{i} A_{x_i} \to \bigoplus_{i<j} A_{x_ix_j} \to \cdots \to A_{x_1x_2\dots x_n}\big). \]
Hence, the cohomology of $C_I\otimes_{A}X$ also depends only on $\sqrt{I}$. In fact, a much finer result holds by~\cite[Proposition 6.10]{DG}. If $I,I'\subseteq A$ are ideals such that $\sqrt{I} = \sqrt{I'}$, then $C_I$ and $C_{I'}$ are isomorphic as objects of the derived category of $Mod(A)$, and the same is true for the truncated complexes $\tilde{C}_I$ and $\tilde{C}_{I'}$ obtained by erasing the leftmost term $A$.

If $A=\bigoplus_{n\in\Z}A_n$ is $\Z$-graded and $I$ is a homogeneous ideal, one defines graded local cohomology functors in the same vein, and one can again use a graded \v{C}ech complex with respect to a finite set of homogeneous generators of $I$. Both the graded \v{C}ech complex and its truncated version again only depend on $\sqrt{I}$. We refer to \cite[Chapters 12 and 13]{BrSh} for details.

For our example, we need a source of silting ring epimorphisms which are a priori not universal localisations. The idea is very easy in essence, we simply consider degree zero components of graded localisations. For the sake of completeness, we recall that, by~\cite[Theorem 1.5.5]{BrHe}, a commutative $\Z$-graded ring $A = \bigoplus_{n \in \Z} A_n$ is noetherian if and only if $A_0$ is noetherian and $A$ is a finitely generated $A_0$-algebra.

\begin{proposition}\label{degree zero flatness}
Let $A$ be a commutative noetherian $\Z$-graded ring and let $h\in A$ be a homogeneous element such that the principal ideal $Ah$ is generated in degree zero up to radical (i.e.\ $h\in\sqrt{AI}$ for $I=Ah\cap A_0$). Then the degree zero component
\[ f_0\colon A_0 \to B_0 \]
of the graded localisation $f\colon A \to B:= A_h$ is a silting ring epimorphism.
\end{proposition}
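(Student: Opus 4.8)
The plan is to exhibit $f_0\colon A_0\to B_0$ as a silting ring epimorphism by realising the relevant bireflective subcategory of $Mod(A_0)$ as $\Dcal_\omega\cap T_1^{\perp_0}$ for a suitable partial silting module $T_1$, and then to invoke the general theory from Section~\ref{section silting}. The key geometric input is the hypothesis $h\in\sqrt{AI}$ with $I=Ah\cap A_0$, which forces the \v{C}ech complexes $C_{Ah}$ and $C_{AI}$ to be isomorphic in the derived category of $Mod(A)$ by~\cite[Proposition 6.10]{DG}, and likewise for their graded and truncated versions. Since $C_{Ah} = (A\to A_h) = (A\to B)$, the truncated \v{C}ech complex $\tilde C_{Ah}$ is just $B$ (in degree $0$), so $B$ is quasi-isomorphic to the truncated \v{C}ech complex $\tilde C_{AI}$ built from a finite set of degree-zero generators of $I$. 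Taking the degree-zero component of everything (which is exact, as a direct summand functor on graded modules), we learn that the $A_0$-module $B_0$ is quasi-isomorphic to a complex $\tilde C$ of the shape $\bigoplus_i (A_0)_{x_i}\to\bigoplus_{i<j}(A_0)_{x_ix_j}\to\cdots$ concentrated in nonnegative cohomological degrees; the point being that this complex is built from localisations of $A_0$ at principal ideals, hence has projective dimension at most one in each spot and, after assembling into a two-term presentation, yields a finite-length complex controlling $f_0$.

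Concretely, first I would pin down the two-term data: from $\tilde C$ I want to extract a projective presentation $\omega$ of an $A_0$-module $T_1$ such that $Z_\omega\in\Bcal\Lcal$ and $\Dcal_\omega\cap T_1^{\perp_0}$ equals the essential image $\Xcal_{B_0}$ of restriction along $f_0$. The cleanest route is to observe that $f_0\colon A_0\to B_0$ is a ring epimorphism (this should follow because $f\colon A\to B$ is a graded localisation, hence a ring epimorphism, and passing to degree-zero parts of a graded ring epimorphism between graded rings again gives a ring epimorphism — or one argues directly that $B_0\otimes_{A_0}B_0\to B_0$ is an isomorphism using that $B_0$ is a localisation-like ring) with $Tor_1^{A_0}(B_0,B_0)=0$. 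The vanishing of $Tor_1$ is where the \v{C}ech description earns its keep: $B\cong\tilde C_{Ah}$ in the derived category shows $B$ has projective dimension governed by the \v{C}ech length, but more to the point, over the \emph{local} situation or via the degree-zero truncation one checks that $B_0$ has projective dimension at most one as an $A_0$-module — indeed $\tilde C$ placed in degrees $0,1,\dots$ with $B_0$ as its only nonzero cohomology (in degree $0$) exhibits, after a standard truncation/stupid-truncation argument using that each $(A_0)_{x_{i_1}\cdots x_{i_k}}$ is $A_0$-flat, that $pd_{A_0}(B_0)\le 1$. Once $pd_{A_0}(B_0)\le 1$ and $Tor_1^{A_0}(B_0,B_0)=0$ are in hand, Example~\ref{silting pd1} (or directly Example~\ref{tilting pd1} if $f_0$ is injective) applies verbatim: $B_0\oplus B_0/f_0(A_0)$ is a silting $A_0$-module and $f_0$ is a silting ring epimorphism.

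So the logical skeleton is: (i) $\sqrt{Ah}=\sqrt{AI}$ and~\cite[Proposition 6.10]{DG} identify $B$ with the truncated \v{C}ech complex on degree-zero generators of $I$ in the derived category; (ii) take degree-zero components to get that $B_0$ is quasi-isomorphic to a bounded complex of flat $A_0$-modules of \v{C}ech type; (iii) deduce $pd_{A_0}(B_0)\le 1$ from the shape of this complex; (iv) verify $f_0$ is a ring epimorphism with $Tor_1^{A_0}(B_0,B_0)=0$; (v) conclude via Example~\ref{silting pd1} that $f_0$ is a silting ring epimorphism. I expect step (iii) to be the main obstacle: the truncated \v{C}ech complex $\tilde C$ has length $n-1$ where $n$ is the number of generators, not length $1$, so extracting the bound $pd_{A_0}(B_0)\le 1$ requires genuinely using that all higher local cohomologies $H^i_{AI}(A)$ for $i\ge 2$ contribute only through their degree-zero parts in a way that collapses — equivalently, that the degree-zero part $\tilde C_0$, although a priori long, has cohomology concentrated in degree $0$ and is a complex of flats, so its brutal truncation $\sigma_{\ge 1}\tilde C_0$ is a flat (hence, over noetherian $A_0$, via a limit argument, or after passing to a resolution) resolution witnessing projective dimension $\le 1$; care is needed here because "bounded complex of flats with cohomology in one degree" does not immediately give a short flat resolution, and one may instead have to argue that the relevant object is $B_0$ together with the \emph{kernel} of $\bigoplus_i(A_0)_{x_i}\twoheadleftarrow$ something, reorganising the long \v{C}ech complex into a genuine two-term flat presentation. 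A secondary subtlety is ensuring the extracted $Z_\omega$ lands in $\Bcal\Lcal$ (maps between projectives, not merely flats) — over a commutative noetherian ring $A_0$ one typically replaces flats by projectives using that countably generated flat modules over such rings are projective, or one simply works with the presentation $0\to P_1\to P_0\to B_0\to 0$ guaranteed by $pd_{A_0}(B_0)\le 1$ directly, which is exactly the hypothesis Example~\ref{silting pd1} consumes.
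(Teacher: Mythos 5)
Your skeleton (i)--(v) coincides with the paper's proof, but the two steps you leave open are exactly where the content lies, and the fixes you sketch for them do not work. For step (iii), no truncation trick on the long \v{C}ech complex is used or needed: from the exact degree-zero sequence $0\to B_0\to\bigoplus_i(A_0)_{x_i}\to\bigoplus_{i<j}(A_0)_{x_ix_j}\to\cdots\to(A_0)_{x_1\cdots x_n}\to 0$ one first deduces that $B_0$ is \emph{flat} (dimension shifting along the finite coresolution by flats), and then that $B_0$ is countably generated, because $A_0$ is noetherian and all the other terms are countably generated; a countably presented flat module is a countable direct limit of finitely generated projectives and hence has projective dimension at most one by \cite[Lemma 1.2.8]{GT}. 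Your proposed alternative, that countably generated flat modules over a commutative noetherian ring are projective, is false (consider $\mathbb{Q}$ over $\mathbb{Z}$), and, as you yourself suspected, the brutal truncation $\sigma_{\ge 1}$ of the degree-zero \v{C}ech complex does not yield a length-one flat resolution; the flatness-plus-countable-presentation argument is the mechanism that closes this step.

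For step (iv), your claim that the degree-zero part of a graded ring epimorphism is again a ring epimorphism is false, and this is precisely where the hypothesis $h\in\sqrt{AI}$ must be used. For instance, take $A=\Kbb[x,y]$ with $\lvert x\rvert=\lvert y\rvert=1$ and $h=x$: then $A_0=\Kbb$, $B_0=\Kbb[y/x]$ is a polynomial ring, and $\Kbb\to\Kbb[y/x]$ is not an epimorphism of rings (here $I=Ah\cap A_0=0$, so the radical hypothesis fails). Nor can one argue that $B_0$ is ``localisation-like'' over $A_0$: the whole point of Example~\ref{example not localisation concrete} is that $f_0$ need not be any localisation of $A_0$. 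The paper instead obtains the epimorphism property from \cite[Propositions 4.3 and 4.6]{DG}: the truncated \v{C}ech complex $\tilde{C}_I$ over $A_0$ is idempotent under derived tensor, and since the exact sequence above identifies $B_0$ with $\tilde{C}_I$ in the derived category (with $B_0$ flat, so derived and plain tensor agree), the map $B_0\otimes_{A_0}f_0\colon B_0\to B_0\otimes_{A_0}B_0$ is an isomorphism; $Tor_1^{A_0}(B_0,B_0)=0$ is then automatic from flatness. Once flatness, $pd_{A_0}(B_0)\le 1$ and the ring epimorphism property are in hand, your final appeal to Example~\ref{silting pd1} is correct and agrees with the paper.
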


\begin{proof}
Since $I = Ah\cap A_0$ is an ideal of $A_0$, we can fix a finite collection $x_1,\dots,x_n \in A_0$ of generators of $I$. Consider the truncated graded \v{C}ech complex $\tilde{C}_{AI} = (\bigoplus_{i} A_{x_i} \to \bigoplus_{i<j} A_{x_ix_j} \to \cdots \to A_{x_1x_2\dots x_n})$ of $AI \subseteq A$. Since this complex is isomorphic in the derived category of graded $A$-modules to the truncated graded \v{C}ech complex of the principal ideal $Ah$, which is simply $\tilde{C}_{Ah} \cong A_h$, we obtain an exact sequence of graded $A$-modules
\[ 0 \to A_h \to \bigoplus_{i} A_{x_i} \to \bigoplus_{i<j} A_{x_ix_j} \to \cdots \to A_{x_1x_2\dots x_n} \to 0. \]
Restricting to degree zero, we obtain an exact sequence in $Mod(A_0)$:
\[
0 \to B_0 \to \bigoplus_{i} (A_0)_{x_i} \to \bigoplus_{i<j} (A_0)_{x_ix_j} \to \cdots \to (A_0)_{x_1x_2\dots x_n} \to 0 \eqno{(\dag)}
\] 
Since all the terms but $B_0$ are clearly flat $A_0$-modules, $B_0$ must be flat too.

The latter exact sequence tells us more, however. Since $A_0$ is noetherian and all terms but $B_0$ are countably generated, $B_0$ is countably generated as well. In particular, $B_0$ has projective dimension at most $1$ by~\cite[Lemma 1.2.8]{GT}, as it is a countable direct limit of projective modules. Moreover, \cite[Propositions 4.3 and 4.6]{DG} tell us that the canonical map of complexes $A_0 \to \tilde{C}_{I}$ induces an isomorphism $\tilde{C}_I \to \tilde{C}_I \otimes_{A_0} \tilde{C}_I$ in the derived category upon tensoring by $\tilde{C}_I$. Since $B_0 \cong \tilde{C}_I$ in the derived category by $(\dag)$, we infer that $B_0\otimes_{A_0} f_0\colon B_0 \to B_0\otimes_{A_0}B_0$ is an isomorphism, and hence that $f_0$ is a ring epimorphism. It follows from Example~\ref{silting pd1} that $f_0\colon A_0 \to B_0$ is a silting ring epimorphism.
\end{proof}

\begin{remark}
In terms of algebraic geometry, the exactness of $(\dag)$ means, by \cite[Theorem III.3.7]{Hart}, that $U = \Spec A_0 \setminus V(I)$ is an open affine subscheme of $\Spec A_0$. Then $U \cong \Spec B_0$ as schemes, since $B_0$ is the ring of global sections of $U$, and $f_0$ is simply the restriction of sections along the open immersion $\Spec B_0\to\Spec A_0$.
\end{remark}

Now we need a criterion for proving that $f_0\colon A_0 \to B_0$ is indeed not a universal localisation. This is a little more tricky and our criterion is obtained by abstraction from~\cite[Remarks 2.1]{HS}. We remind the reader of the fact that universal localisations coincide with localisations at multiplicative sets if $A_0$ happens to be local (Example~\ref{classical vs universal loc}).

\begin{proposition}\label{example not localisation abstract}
In the situation of Proposition~\ref{degree zero flatness}, suppose further that $A$ is a unique factorisation domain with all homogeneous units contained in $A_0$ and that $h$ is irreducible of non-zero degree. Then $f_0\colon A_0 \to B_0$ reflects units (i.e.\ $a\in A_0$ is a unit if $f_0(a)\in B_0$ is a unit).
In particular, $f_0$ is a localisation at a multiplicative set $\Sigma\subseteq A_0$ if and only if it is an isomorphism.
\end{proposition}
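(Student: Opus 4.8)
The plan is to prove the two assertions in turn, the second being an immediate consequence of the first together with Example~\ref{classical vs universal loc}. For the \emph{reflects units} claim, suppose $a \in A_0$ is such that $f_0(a)$ is a unit in $B_0 = A_h$. Then $a$ becomes invertible in $A_h$, so $1/a = g/h^m$ in $A_h$ for some homogeneous $g \in A$ and some $m \ge 0$; clearing denominators, $h^m = ag$ inside the domain $A$. Since $A$ is a UFD and $h$ is irreducible, the factorisation $h^m = ag$ forces $a$ to be a unit times a power $h^k$ of $h$, i.e.\ $a = uh^k$ with $u \in A^\times$. Now the degree bookkeeping enters: $a$ is homogeneous of degree $0$, the unit $u$ is homogeneous and lies in $A_0$ by hypothesis, and $h$ has non-zero degree, so $\deg(a) = \deg(u) + k\deg(h) = k\deg(h)$ can equal $0$ only if $k = 0$. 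Hence $a = u \in A^\times \cap A_0$. It remains to observe that a homogeneous unit of $A$ lying in $A_0$ is in fact a unit of the \emph{subring} $A_0$: its inverse in $A$ is homogeneous (units in a graded domain are homogeneous, as one sees by comparing top and bottom degrees in $uu^{-1}=1$), and by hypothesis homogeneous units live in $A_0$, so $u^{-1} \in A_0$. This proves that $f_0$ reflects units.

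For the second assertion, suppose $f_0$ is the localisation of $A_0$ at a multiplicative set $\Sigma \subseteq A_0$. By Proposition~\ref{degree zero flatness}, $f_0$ is a (silting) ring epimorphism, so in particular the localisation map $A_0 \to (A_0)_\Sigma \cong B_0$ is injective (it is flat, hence torsion-free over the domain $A_0$, and not the zero ring). The point is then that each $s \in \Sigma$ already becomes a unit in $B_0$ by construction of the localisation, so by the reflection of units just established each such $s$ is already a unit in $A_0$. Consequently inverting $\Sigma$ changes nothing: the canonical map $A_0 \to (A_0)_\Sigma$ is an isomorphism, i.e.\ $f_0$ is an isomorphism. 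The converse direction is trivial, since an isomorphism is the localisation at the empty (or any) multiplicative set.

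The main obstacle, and the only place where some care is needed, is the UFD argument combined with the grading: one must make sure that the element $g$ and the unit $u$ can be taken homogeneous, and that ``irreducible of non-zero degree'' is exactly what rules out the exponent $k$ from being positive. All of this rests on the standard fact that in a $\Z$-graded domain every unit is homogeneous, which should be recalled explicitly. Everything else is formal: the passage from ``reflects units'' to ``not a universal localisation'' uses only that a universal localisation over a \emph{local} base is a classical localisation (Example~\ref{classical vs universal loc}), but here $A_0$ need not be local, so strictly speaking the clean dichotomy is phrased, as in the statement, only for classical localisations; the genuinely non-trivial consequence --- that $f_0$ is not a universal localisation at all --- will be extracted afterwards by base-changing to a suitable localisation of $A_0$ at a prime, where the reflection-of-units property is inherited and the local case applies.
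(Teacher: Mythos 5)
Your proof is correct and takes essentially the same route as the paper: both pass from a unit of $B_0$ to a unit of $B=A_h$, use that $A$ is a UFD with $h$ irreducible to write $a=uh^k$ with $u\in A^\times$, and then kill the exponent $k$ by degree bookkeeping, using that homogeneous units lie in $A_0$ and $\deg h\neq 0$; the paper simply quotes the description of the units of $A_h$ as $u\cdot h^i$ where you re-derive it from $h^m=ag$, and it leaves the ``multiplicative set $\Rightarrow$ isomorphism'' step implicit, which you spell out correctly. Only a cosmetic slip: $B_0$ is the degree-zero part of $A_h$, not $A_h$ itself, but your subsequent argument uses this correctly.
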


\begin{proof}
Let $a\in A_0$ be such that $f_0(a)$ is a unit in $B_0$. Then $f_0(a)$ is also a unit in $B = A_h$, but, since $A$ is a unique factorisation domain and $h$ is irreducible, all units in $B$ are of the form $u\cdot h^i$, where $i\in\Z$ and $u$ is a unit of $A$. Thus, $a=u h^i$ in $A$ for some $i\ge 0$ and, since $a$ and $h$ are homogeneous, so is $u$. It follows that $u$ is of degree zero, that $i=0$ and, in particular, that $a=u$ is a unit of $A_0$.
\end{proof}

\begin{example}[{\cite[Remarks 2.1]{HS}}]\label{example not localisation concrete}
Here is an explicit example of a (flat) tilting ring epimorphism which is not a universal localisation. 
Let $\Kbb$ be an algebraically closed field and $A = \Kbb[y_0,y_1,y_2,y_3]$ be a polynomial ring, with a grading given by
\[ \lvert y_0\rvert = \lvert y_3\rvert = 1 \qquad\textrm{and}\qquad \lvert y_1\rvert = \lvert y_2\rvert = -1. \]
We localise $A$ at the homogeneous element $h=y_0y_1^3-y_1^3y_3$ of degree $-2$ and take $f_0\colon A_0 \to (A_h)_0 = B_0$. We claim that this is a tilting ring epimorphism which is not a universal localisation. In particular, this gives an instance of Theorem~\ref{Main 1}(2) and Theorem~\ref{silting countable type} where the reduction cannot be improved to give us a set of finitely presented objects.

Indeed, notice first that $I = Ah\cap A_0$ contains the elements $hy_0^2$ and $hy_3^2$. Since $h^3 \in Ay_0^2+Ay_3^2$, we have $h^4 \in AI$. In particular, $f_0\colon A_0 \to B_0$ is a silting ring epimorphism by Proposition~\ref{degree zero flatness}. However, since $f_0$ is injective, it follows from the proof of the proposition and from Example~\ref{tilting pd1} that $f_0$ is even a tilting ring epimorphism.

Note that Proposition~\ref{example not localisation abstract}, as it stands, only tells us that $f_0$ is not a localisation at a multiplicative set. It is not difficult, however, to exclude universal localisations as well. The idea is to localise both $A$ and $B$ at a suitable multiplicative set $\Sigma\subseteq A_0$ to make $A_0$ local. Once we then show that 
\[ (f_0)_\Sigma\colon (A_0)_\Sigma \to (B_0)_\Sigma \]
is not a localisation at a multiplicative set, it will not be a universal localisation either, and so also the original map $f_0$ cannot be a universal localisation.

To explain how to choose $\Sigma$, let us briefly look at the geometry of the embedding $A_0 \to A$. As a $\Kbb$-subalgebra of $A$, $A_0$ is generated by $y_0y_1$, $y_0y_2$, $y_1y_3$ and $y_2y_3$. Moreover, the map $\Kbb[x_0,x_1,x_2,x_3]/(x_0x_3-x_1x_2) \to \Kbb[y_0,y_1,y_2,y_3]$ given by 
\[ x_0\mapsto y_0y_1,\quad x_1\mapsto y_0y_2,\quad x_2\mapsto y_1y_3,\quad x_3\mapsto y_2y_3 \]
is injective since it is the map corresponding to the surjective polynomial map of affine varieties
\begin{align*}
\mathbb{A}^4_\Kbb &\longrightarrow \{ \left(\begin{smallmatrix}x_0 & x_1 \\ x_2 & x_3 \end{smallmatrix}\right) \in M_2(\Kbb) \mid \det \left(\begin{smallmatrix}x_0 & x_1 \\ x_2 & x_3 \end{smallmatrix}\right) = 0 \}
\quad (\subseteq \mathbb{A}^4_\Kbb)
\\
(y_1,y_2,y_3,y_4) &\longmapsto \left(\begin{smallmatrix}y_0 \\ y_3\end{smallmatrix}\right) \cdot (\begin{smallmatrix}y_1 & y_2\end{smallmatrix}) = \left(\begin{smallmatrix}y_0y_1 & y_0y_2 \\ y_1y_3 & y_2y_3 \end{smallmatrix}\right).
\end{align*}
In particular, $A_0$ is the coordinate ring of the variety of $2\times 2$ singular matrices over $\Kbb$ and has a singularity at the origin. We localise at the maximal ideal of $A_0$ corresponding to the origin. That is, $\Sigma$ is the multiplicative set of all degree zero polynomials in $A=\Kbb[y_0,y_1,y_2,y_3]$ which have a non-zero absolute term.

Now $A_\Sigma$ is a unique factorisation domain since $A$ is such and $(A_\Sigma)_0=(A_0)_\Sigma$ is local by the construction. Moreover, the units of $A_\Sigma$ are of the form $\frac{u}{\sigma}$, where $\sigma\in\Sigma$ and $u$ divides an element of $\Sigma$ in $A$. However, if $u$ is homogeneous, it must be of degree zero since it has a non-zero absolute term. Thus, we can apply Proposition~\ref{example not localisation abstract} to $(f_0)_\Sigma\colon (A_0)_\Sigma \to (B_0)_\Sigma$ as described above.
\end{example}


\end{document}